\numberwithin{equation}{section}
\theoremstyle{plain}
\newtheorem{thm}{Theorem}[section]
\newtheorem{lemma}[thm]{Lemma}
\newtheorem{prop}[thm]{Proposition}
\theoremstyle{definition}
\newtheorem{exmp}[thm]{Example}
\theoremstyle{remark}
\newtheorem{rmk}[thm]{Remark}
   \def\MR#1{}
\newcommand{\af}{\mathfrak{a}}
\newcommand{\Qb}{\mathbb{Q}}
\newcommand{\Rb}{\mathbb{R}}
\newcommand{\Zb}{\mathbb{Z}}
\newcommand{\Nm}{{\mathrm{Nm}}}
\newcommand{\Hc}{{\mathcal{H}}}
\newcommand{\SL}{{\mathrm{SL}}}
\newcommand{\Oc}{\mathcal{O}}
\newcommand{\GL}{{\mathrm{GL}}}
\newcommand{\lp}{\left (}
\newcommand{\rp}{\right )}
\newcommand{\df}{\mathfrak{d}}
\newcommand{\smat}[4]{\left(\begin{smallmatrix}
                 #1 & #2\\
                 #3 & #4
\end{smallmatrix}\right)}
\newcommand{\pmat}[4]{\begin{pmatrix}
                 #1 & #2\\
                 #3 & #4
\end{pmatrix}}
\newcommand{\Dc}{{\mathcal{D}}}
\newcommand{\Cb}{\mathbb{C}}
\newcommand{\Pb}{\mathbb{P}}
\newcommand{\ef}{\mathfrak{e}}
\newcommand{\sh}{\mathrm{sh}}
\newcommand{\ch}{\mathrm{ch}}
\newcommand{\Nb}{\mathbb{N}}
\newcommand{\varep}{\varepsilon}
\newcommand{\Sc}{\mathcal{S}}
\newcommand{\ebf}{{\mathbf{e}}}
\newcommand{\sgn}{\mathrm{sgn}}
\newcommand{\Cl}{{\mathrm{Cl}}}
\newcommand{\llam}{{\lambda}}
\newcommand{\bw}{\bar{w}}
\newcommand{\wpp}{{w^\perp}}
\newcommand{\Xp}{X^\perp}
\newcommand{\Yp}{Y^\perp}
\newcommand{\half}{{\tfrac{1}{2}}}
\newcommand{\Ab}{\mathbb{A}}
\newcommand{\Ac}{{\mathcal{A}}}
\newcommand{\tPsi}{{\tilde{\Psi}}}
\newcommand{\tr}{\mathrm{tr}}
\newcommand{\zbar}{{\overline{z}}}
\newcommand{\Zhat}{{\widehat{\mathbb{Z}}}}
\newcommand{\transpose}[1]{{\prescript{t}{}{#1}}}
\newcommand{\tef}{{\tilde{\mathfrak{e}}}}
\newcommand{\tz}{{\tilde{z}}}
\newcommand{\Abf}{\mathbb{A}_\mathbf{f}}
\renewcommand{\Re}{\mathrm{Re}}
\renewcommand{\Im}{\mathrm{Im}}
\begin{document}
\title{
Petersson Norm of Cusp Forms Associated to Real Quadratic Fields
}
\author{Yingkun Li}
\address{Fachbereich Mathematik,
Technische Universit\"at Darmstadt, Schlossgartenstrasse 7, D--64289
Darmstadt, Germany}
\email{li@mathematik.tu-darmstadt.de}

\begin{abstract}
In this article, we compute the Petersson norm of a family of weight one cusp forms constructed by Hecke and express it in terms of the Rademacher symbol and the regulator of real quadratic field.
\end{abstract}
\maketitle

\section{Introduction}

For an integral ideal $\af$ in a real quadratic field $F = \Qb(\sqrt{D}) \subset \Rb$ with fundamental discriminant $D$ and a positive integer $\kappa$, we have an even integral lattice $L = L_{\af, \kappa} = (\af, \kappa \Nm/\Nm(\af))$. It is anisotropic over $\Qb$ and has signature $(1, 1)$ over $\Rb$.
In \cite{Hecke26}, Hecke associated to such lattice $L$ a holomorphic, vector-valued cusp form $\vartheta_L$ of weight one, whose Fourier expansion is given explicitly in terms of the elements in the dual lattice $L^\vee$ (see \S \ref{subsec:Hecke_Theta}).
In this note, we will calculate the Petersson norm of $\vartheta_L$, which is defined by
\begin{equation}
  \label{eq:PeterssonNorm}
  \left\| \vartheta_L \right\|^2_{\mathrm{Pet}} := \int_{\Gamma \backslash \Hc} v |\vartheta_L(\tau)|^2 d \mu(\tau),
\end{equation}
with $\Gamma := \SL_2(\Zb)$ and $d \mu(\tau) := \frac{dudv}{v^2}$ the invariant measure.
Let $\varepsilon_\kappa \in \Oc_F^\times$ be the totally positive unit defined in \S \ref{subsec:Hecke_Theta}. 
It turns out that the Petersson norm of $\vartheta_L$ is a rational multiple of $\log \varepsilon_\kappa$, and the rational multiple can be expressed in terms of Rademacher symbols $\Psi(\gamma)$ of hyperbolic elements $ \gamma \in \Gamma$ (see \S \ref{subsec:Rademacher}).

Our result is as follows.
\begin{thm}
  \label{thm:main}
Let $g_L, g_\kappa \in \GL^+_2(\Qb)$ and $\gamma_{D, \kappa} \in \Gamma$ be defined as in \eqref{eq:gL}, \eqref{eq:gkappa} and \eqref{eq:gamma} respectively.
 Denote $\gamma_0 := g_\kappa \gamma_{D, \kappa} g_\kappa^{-1}$ and $\gamma_1 := g_L \gamma_{D, \kappa} g_L^{-1}$.
Then $\gamma_j \in \Gamma$ and 
\begin{equation}
  \label{eq:main}
    \left\| \vartheta_L \right\|^2_{\mathrm{Pet}} = 
- \frac{1}{12} \lp \Psi(\gamma_0) + \Psi(\gamma_1) \rp
 \log \varepsilon_\kappa.
\end{equation}
\end{thm}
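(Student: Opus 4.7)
My plan is to compute $\|\vartheta_L\|_{\mathrm{Pet}}^2$ by a Rankin--Selberg unfolding, exploiting the fact that $L$ is anisotropic of signature $(1,1)$. I would begin from the explicit Fourier expansion of $\vartheta_L$ to write $|\vartheta_L(\tau)|^2$ as a double sum over vectors $\lambda, \mu \in L^\vee$, and then group terms by the $\mathrm{SO}(L)$-orbit of $\lambda$. Because $L$ is anisotropic, every nonzero vector has infinite-cyclic stabilizer in $\mathrm{SO}(L)$ generated by multiplication by $\varepsilon_\kappa$. Under the identifications furnished by $g_L$ and $g_\kappa$, this stabilizer corresponds to $\langle \gamma_1 \rangle$ or $\langle \gamma_0 \rangle \subset \Gamma$, depending on which piece of $L^\vee/L$ the vector lies in; it is precisely these matrices whose Rademacher symbols appear in the answer.

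Applying unfolding, the integral over $\Gamma \backslash \Hc$ becomes a sum of two integrals, over fundamental domains for $\langle \gamma_j \rangle$ acting on $\Hc$ ($j=0,1$). The inner sum collapses to a single orbit, yielding a partial theta--type function of weight one. I would then evaluate each integral by a Kronecker-limit identity: integrating $v$ times such a partial theta against the invariant measure typically produces a logarithmic contribution essentially of the shape $\log|\eta(\tau)|$, up to easier terms. When this expression is evaluated along the boundary of a fundamental domain for $\langle \gamma_j \rangle$, the transformation law
\[
\log \eta(\gamma \tau) - \log \eta(\tau) = \tfrac{1}{2}\log(c\tau+d) + \tfrac{\pi i}{12}\Psi(\gamma)
\]
for hyperbolic $\gamma = \smat{a}{b}{c}{d} \in \Gamma$ inserts the Rademacher symbol $\Psi(\gamma_j)$, while the length of the closed cycle contributes the factor $\log \varepsilon_\kappa$.

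The main obstacle lies in the bookkeeping. I need to verify that the two orbits combine additively to produce $\Psi(\gamma_0) + \Psi(\gamma_1)$ rather than a difference or a single term, to pin down the overall numerical constant $-\tfrac{1}{12}$ (which should originate from the $1/12$ in the multiplier of $\eta$), and to check compatibility with the vector-valued Weil representation under which $\vartheta_L$ transforms; in particular the Gauss sums between components indexed by cosets of $L^\vee/L$ must be tracked carefully. A preliminary lemma is also required showing that $\gamma_0, \gamma_1$ actually lie in $\Gamma$ and not merely in $\GL_2^+(\Qb)$, which is a conjugation-compatibility condition on $g_L, g_\kappa$ and $\gamma_{D,\kappa}$ and is a prerequisite for the Rademacher symbols even to be defined.
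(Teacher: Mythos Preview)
Your outline has a genuine gap at the unfolding step. You propose to group the double sum in $|\vartheta_L(\tau)|^2$ by $\mathrm{SO}(L)$-orbits and then claim that ``the integral over $\Gamma\backslash\Hc$ becomes a sum of two integrals, over fundamental domains for $\langle\gamma_j\rangle$ acting on $\Hc$''. But $\mathrm{SO}(L)$ acts on $\Dc^+_{F,N}\cong\Rb$ (through $\varepsilon_\kappa$), whereas $\Gamma=\SL_2(\Zb)$ acts on $\Hc$; there is no direct mechanism that converts orbit-grouping in the former into an unfolding of the latter. Your justification --- ``depending on which piece of $L^\vee/L$ the vector lies in'' --- does not produce exactly two hyperbolic elements, much less the two specific conjugates $g_\kappa\gamma_{D,\kappa}g_\kappa^{-1}$ and $g_L\gamma_{D,\kappa}g_L^{-1}$; the finite group $L^\vee/L$ has order $D\kappa$, not $2$, and the matrices $g_\kappa$ and $g_L$ encode completely different data (an order versus a $\Zb$-basis of $L\cap 2L^\vee$).

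What the paper actually does is supply the missing bridge. It first observes that $v|\vartheta_L(\tau)|^2$, after reinserting the $t$- and $t_0$-integrals, is a double cycle integral of the theta kernel $\Theta_M$ attached to a \emph{unimodular} lattice $M\supset L\oplus(-L)$ in a signature $(2,2)$ space $V$. Proposition~2.3 shows this is constant in $t_0$, so one may set $t_0=0$. The $(2,2)$ symmetric space is $\Hc^2$, and the two copies of $\Hc$ are exactly where your two hyperbolic elements live: the paper takes a Witt decomposition $V=U+U^\vee$, performs a partial Fourier transform in the $U$-direction, and decomposes the resulting sum over $M_2(\Zb)/\Gamma$ by rank. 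The rank~$2$ orbits contribute the $E_2^*(\tilde z_0)$ term (hence $\gamma_0$, via $g_\kappa$) and the rank~$1$ orbits contribute the $E_2^*(\tilde z_1)$ term (hence $\gamma_1$, via $g_L$); see Proposition~4.1. The Rademacher symbol then enters through Meyer's theorem on cycle integrals of $E_2^*$, not through the $\eta$-multiplier directly, and the sign $\Psi(\gamma_0)+\Psi(\gamma_1)$ (rather than a difference) comes from an extra twist by the different $\df$ in Proposition~2.4 together with the identity $\Psi(\gamma_1(\af\df))=-\Psi(\gamma_1(\af))$. None of this structure is visible from a direct Rankin--Selberg unfolding on the $(1,1)$ lattice.
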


\begin{rmk}
  In \cite{CL16}, we constructed a harmonic Maass form $\tilde{\vartheta}_L$ that maps to $\vartheta_L$ under the differential operator $\xi := 2iv \overline{\partial_{\overline{\tau}}}$. One can express $\| \vartheta_L  \|^2_{\mathrm{Pet}}$ as a rational linear combination of the principal part Fourier coefficients of $\tilde{\vartheta}_L$, which are rational multiples of the fundamental unit of $F$. Therefore, $\|\vartheta_L\|^2_{\mathrm{Pet}} = c_L \log \varepsilon_\kappa$ for some $c_L \in \Qb$ by the result loc.~cit. 
\end{rmk}

In \cite{Hecke26}, Hecke gave a well-known example of $\vartheta_L$ by taking $D = 12, \kappa = 1$ and $\af = \df = \sqrt{D}\Oc_F$. The cusp form $\vartheta_L(\tau)$ has four nonzero components, all equal to $\eta^2(\tau)$ up to sign. {In this case, $\varepsilon_\kappa = 7 + 2\sqrt{D}$ and $\gamma_0 = \gamma_1 = \smat{7}{4}{12}{7}$.
From the definition of $\Psi$ in Eq.\ \eqref{eq:Psi}, we can evaluate $\Psi(\gamma_0) = \Psi(\gamma_1) = -2$. Therefore the theorem above implies that $\|\vartheta_L\|^2_{\mathrm{Pet}} = 4 \|\eta^2\|^2_{\mathrm{Pet}}  = \frac{2\log( 2 + \sqrt{3})}{3}$. 
On the other hand, if one takes $D = 12, \kappa = 1$ and $\af = \Oc_F$, then $\gamma_0$ is unchanged, but $\gamma_1 = \smat{7}{12}{4}{7}$ and $\Psi(\gamma_1) = 2$, which implies that $\vartheta_L(\tau)$ vanishes identically.} The same is true when $12$ is replaced with any positive, even fundamental discriminant.

The idea of the proof is to interchange the order of integration and evaluate the theta integral
$$
\Phi(t, t_0; L) := \int_{\Gamma \backslash \Hc} v \langle \Theta_L(\tau, t),  \Theta_L(\tau, t_0)  \rangle \frac{dudv}{v^2},
$$ 
where $\Theta_L$ is the integral kernel used to produce $\vartheta_L$ (see \S \ref{subsec:Hecke_Theta}) and $\langle, \rangle$ is the Hermitian pairing on finite dimensional complex vector spaces induced by the $L_2$-norm.
It turns out that the integral of $\Phi$ over $t$ is a constant independent of $t_0$. 
An important observation is that this integrand is the theta function attached to a unimodular lattice $M$ containing $L \oplus -L$, which has signature $(2, 2)$.
We can then evaluate $\Phi(t, t_0; L)$ using its Fourier expansion along a 1-dimensional boundary of an $\mathrm{O}(2, 2)$ Shimura variety. 
This step is a simpler version of the calculations in \cite{Kudla16} (see \cite{Gritsenko12} for a nice example), except the infinity component of the Schwartz function being slightly different.

Since $\vartheta_L$ is itself a theta lift, the Rallis inner product formula is a natural tool to evaluate its Petersson norm \cite{Ra87}. The most general case of this formula was recently completed by Gan, Qiu and Takeda \cite{GQT14}. In the notation loc.~cit., the case at hand is outside both the convergent range and the first term range. 
Gan, Qiu and Takeda extended the Rallis inner product formula to such cases by providing a missing ingredient, a second term identity of the regularized Siegel-Weil formula.
Our result can be seen as a more explicit version of Theorem 1.3 loc.~cit.\ for the dual pair $(\SL_2, \mathrm{O}(1, 1))$. The proof here does not use the regularized Siegel-Weil formula, and is more straightforward using Fourier expansions of theta lifts. Furthermore, the calculations here will be useful for other theta lifts involving real-analytic modular forms that we are currently pursuing \cite{Li18}.

The paper is organized as follows. We recall the basic setups in \S \ref{sec:setup}, then proceed to the necessary calculations in \S \ref{sec:calculations}, before proving Theorem \ref{thm:main} in \S \ref{sec:proof}.

\section*{Acknowledgment}
We thank \"{O}.\ Imamoglu for helpful conversations about cycle integrals and the Rademacher symbol, and the referee for helpful comments. 
The author is partially supported by the DFG grant BR-2163/4-2 and an NSF postdoctoral fellowship.

\section{Setup.}
\label{sec:setup}
\subsection{Quadratic Spaces and Embeddings.}
Let $(V, Q)$ be a rational quadratic space of signature $(p, q)$. 
The Grassmannian $\Dc$ of oriented, negative definite subspaces of dimension $q$ is a symmetric space. 
When $q = 2$, there is a complex structure on the Grassmannian by identifying it with
$$
\{Z \in V(\Cb): Q(Z) = 0, Q(\Im(Z)) < 0\} / \Cb^\times \subset \Pb(V(\Cb)).
$$
For example, take $(V, Q) = (M_2, \det/N)$, where $M_2(R)$ is the space of $2$ by $2$ matrices with entries in {an additive subgroup $R \subset \Cb$}, and $N \in \Qb_{> 0}$.
Let $\Dc^+$ be a connected component of $\Dc$ isomorphic to $\Hc^2$ via the map
\begin{align*}
\Hc^2   &\to \Dc^+ \\
(z_1, z_2) &\mapsto  \Cb Z(z_1, z_2),
\end{align*}
where $Z(z_1, z_2) :=  \smat{z_1z_2}{z_2}{z_1}{1} \in M_2(\Cb)$ for any $z_1, z_2 \in \Cb$.
Under this identification, the action of $\mathrm{SO}(2, 2)$ on $\Dc^+$ is the action of $\SL_2 \times \SL_2$ on $\Hc^2$.

Another example comes from number field.
Let $F = \Qb(\sqrt{D}) \subset \Rb$ be a real quadratic field with $'$ the Galois conjugation in $\mathrm{Gal}(F/\Qb)$.
The rational quadratic space $V_{F, N} := (F, \Nm/N)$ has signature $(1, 1)$. 
Since $F\otimes_\Qb \Rb \cong \Rb^2$ via the two real embeddings of $F$, the symmetric space $\Dc_{F, N}$ associated to $V_{F, N}$ is a subset of $\Rb^2$, whose connected component $\Dc_{F, N}^+$ is isomorphic to $\Rb$ via
\begin{equation}
  \label{eq:W}
  \begin{split}
      \Rb & \to \Dc^+_{F, N} \subset \Rb^2 \\
t &\mapsto Z(t) := \sqrt{N} \binom{e^t}{-e^{-t}}.
  \end{split}
\end{equation}
The orthogonal complement of $Z(t)$ is spanned by the vector $Z^\perp(t) := \sqrt{N} \binom{e^t}{e^{-t}}$.

With the quadratic form $Q = \det/N$, the following vector space is also a rational quadratic space of signature $(2, 2)$
\begin{equation}
  \label{eq:V}
V := \left\{\pmat{\lambda_1}{\lambda_2}{\lambda_2'}{\lambda_1'} \in M_2(F) \right\}.
\end{equation}
We can embed $V_{F, N}$ and $-V_{F, N}$ into $V$ via the the maps $\lambda \mapsto \smat{\lambda}{}{}{\lambda}$ and $\lambda \mapsto \smat{}{\lambda}{\lambda}{}$.
This induces an isometry $V \cong V_{F, N} \oplus -V_{F, N}$ of rational quadratic spaces and an embedding of $(\Dc_{F, N}^+)^2 \cong \Rb^2$ into the symmetric space $\Dc_V$ associated to $V$ via
$$
w = w(t, t_0) = X(t) + iY(t_0) :=  {\sqrt{N}}{} \smat{e^t}{ie^{t_0}}{ie^{-t_0}}{-e^{-t}} \in V(\Cb),
$$
which satisfies $(w, w) = 0$ and $(w, \bw) = -1 < 0$ for any $(t, t_0) \in \Rb^2$.
Meanwhile, we also define $\wpp(t, t_0)  \in V(\Cb)$ by 
$$
\wpp = \wpp(t, t_0) = \Xp(t) + i \Yp(t_0) := {\sqrt{N}}{} \smat{e^t}{ie^{t_0}}{-ie^{-t_0}}{e^{-t}} \in V(\Cb).
$$
Then $\{X(t), Y(t_0), \Xp(t), \Yp(t_0)\}$ form an orthogonal basis of $V(\Rb)$. 
For $\llam \in V(\Rb)$, we denote $\llam_w$ and $\llam_\wpp$ the projections of $\llam$ onto the plane spanned by $\{X(t), Y(t_0) \}$ and $\{\Xp(t), \Yp(t_0) \}$ respectively.
It is then easy to see that 
\begin{equation}
  \label{eq:norm_proj}
  Q(\llam_w) = - \frac{  (\llam, X(t))^2 +  (\llam, Y(t_0))^2}{4},   Q(\llam_\wpp) = \frac{ (\llam, \Xp(t))^2 + (\llam, \Yp(t_0))^2}{4},
\end{equation}
where $(,)$ is the bilinear form associated to $Q$. 
We define $Q_w(\llam) := Q(\llam_\wpp) - Q(\llam_w)$ to be the majorant associate to $w$.

\subsection{Witt Decomposition.}
\label{subsec:Witt}
The quadratic space $V$ has a natural Witt decomposition $V = U + U^\vee$, where $U, U^\vee$ are isotropic $\Qb$-subspaces of $V$ defined by
\begin{equation}
  \label{eq:U}
  U := \left\{\pmat{\mu}{-\mu}{-\mu'}{\mu'} \in M_2(F) \right\}  ,
  U^\vee := \left\{\pmat{\mu}{\mu}{\mu'}{\mu'} \in M_2(F) \right\}  .
\end{equation}
We fix a basis $\{\ef_1, \ef_2 \}$ of $U$ with corresponding dual basis $\{\ef^\vee_1, \ef^\vee_2\}$ of $U^\vee$ by
\begin{equation}
  \label{eq:ebasis}
  \ef_1 := \frac{\sqrt{D}}{2} \pmat{1}{-1}{1}{-1}, 
  \ef_2 := \frac{{1}}{2} \pmat{1}{-1}{-1}{1},
  \ef^\vee_1 := \frac{N}{2\sqrt{D}} \pmat{-1}{-1}{1}{1}, 
  \ef^\vee_2 := \frac{N}{2} \pmat{1}{1}{1}{1}.
\end{equation}
For $\llam \in V$, write $\llam_U$ and $\llam_{U^\vee}$ the $U$ and $U^\vee$ component of $\llam$ respectively. 
With respect to the basis \eqref{eq:ebasis}, we can view $\llam_U$ and $\llam_{U^\vee}$ as column vectors in $\Qb^2$ and write
$$
Q(\llam) = {\prescript{t}{}{\llam_U} \cdot \llam_{U^\vee}}
$$
Therefore, $V$ is isomorphic to $M_2(\Qb)$ as rational quadratic spaces via 
\begin{equation}
  \label{eq:isom}
  \begin{split}
\iota:  V &\stackrel{\cong}{\to}  M_2(\Qb) \\
\llam &\mapsto \pmat{(\llam, \ef^\vee_2)}{-(\llam, \ef^\vee_1)}{(\llam, \ef_1)}{(\llam, \ef_2)}.
\end{split}
\end{equation}
This isometry allows us to identify $\Dc^+_V$ with $\Dc^+ \cong \Hc^2$ by sending $w \in \Dc^+_V$ to $(w, \ef_2)^{-1} \iota(w) = Z(z_1, z_2) \in M_2(\Cb)$, where
\begin{equation}
  \label{eq:zj}
  z_1:= \frac{(w, \ef_1)}{(w, \ef_2)}, \quad
    z_2:= - \frac{(w, \ef^\vee_1)}{(w, \ef_2)}.
\end{equation}
Suppose $\{\tef_1, \tef_2\} = \{a \ef_1 + b \ef_2, c \ef_1 + d \ef_2\}$ is a different rational basis of $U$ with $g := \smat{a}{b}{c}{d} \in \GL^+_2(\Qb)$ and dual basis $\{\tef^\vee_1, \tef^\vee_2\} = \{ \tfrac{d \ef^\vee_1 - c \ef^\vee_2}{\det{g}}, \tfrac{-b \ef^\vee_1 + a \ef^\vee_2}{\det{g}} \}$. 
Then the corresponding point $(\tz_1, \tz_2) \in \Hc^2$ defined as in \eqref{eq:zj} satisfies
\begin{equation}
  \label{eq:tz}
  \tz_1 = g \cdot z_1, \quad \tz_2 = \frac{z_2}{\det g}.
\end{equation}
Let $\iota_g: V \to M_2(\Qb)$ be the isomorphism with respect to this basis.

With respect to the basis \eqref{eq:ebasis}, $\Cb \cdot \iota(w)$ is a point on $\Dc$ where
\begin{equation}
  \label{eq:iotaw}
  \begin{split}
 \iota(w(t, t_0)) &= \frac{ \sh(t) + i \ch(t_0)}{\sqrt{N}} Z(z_1(t, t_0), z_2(t, t_0)), \\
 z_1(t, t_0) &= \sqrt{D} \frac{- \sh (2 t) - \sh(2 t_0) + 2 i \ch(t-t_0) }{2(1 + \sh^2 t + \sh^2 t_0)}, z_2(t, t_0) = \frac{N}{{D}} z_1(t, -t_0).
 \end{split}
\end{equation}
Furthermore, $\iota(\wpp) =  \frac{(\ch(t) + i \sh(t_0))}{\sqrt{N}} \smat{\overline{z_1} z_2}{z_2}{\overline{z_1}}{1}$.
For $t \in \Rb$, we define
\begin{equation}
  \label{eq:z}
  z(t) = x(t) + i y(t) := \sqrt{D} \frac{-\sh(t) + i}{\ch(t)}.
\end{equation}
Note that $|z(t)| = \sqrt{D}$ for all $t \in \Rb$.
When $t_0 = 0$, we have $z_1(t, 0) = D z_2(t, 0) /N = z(t)$.
For any $g = \smat{a}{b}{c}{d} \in \GL_2^+(\Qb)$, the vectors $w(t, 0)$ and $\wpp(t, 0)$ become
\begin{equation}
  \label{eq:iotaw0}
\begin{split}
  \iota_g(w(t, 0)) &= \frac{-\overline{z(t)}}{\sqrt{N} y(t)} (cz(t) + d) Z(\tz_1(t), \tz_2(t)), \\
  \iota_g(\wpp(t, 0)) &= \frac{\sqrt{D}}{\sqrt{N} y(t)} (c\overline{z(t)} + d) Z(\overline{\tz_1(t)}, \tz_2(t))
\end{split}
\end{equation}
with $\tz_1(t) = g z(t), \tz_2(t) = \frac{N z(t)}{D \det{g}}$.
It is straightforward to check that 
\begin{equation}
  \label{eq:differential}
  d\tz_1 = \frac{i\det(g) yz}{(cz + d)^2 \sqrt{D}} dt, \;
d\tz_2 = \frac{i N yz}{\det(g) D^{3/2}} dt.
\end{equation}

\subsection{Hecke's Theta Function.}
\label{subsec:Hecke_Theta}
Let $N \in \Qb_{> 0}$ be the same as in the previous two sections.
For an $\Oc_F$-ideal $\af$ such that 
\begin{equation}
\label{eq:kappa}
\kappa := \Nm(\af) /N \in \Nb,
\end{equation}
the lattice $L = L_{\af, \kappa} := (\af, \Nm/N) \subset V_{F, N}$ is even integral, and its dual $L^\vee$ is given by the fractional ideal $(\kappa \df)^{-1} \af$.
Let $\Gamma_{\kappa}^+ \cong \Zb$ be subgroup of the discriminant kernel of this lattice that also fixes $\Dc^+_{F, N}$. It is generated by the smallest totally positive unit $\varep_{\kappa} \in \Oc_F^{\times}$ satisfying $\varep_\kappa > 1$ and $(\varep_\kappa - 1)\af^\vee \subset \af$. The last condition is equivalent to 
\begin{equation}
\label{eq:varep_cond}
\varep_\kappa - 1 \in \kappa \df.
\end{equation}
Let $\Sc_{L}$ be the space of $\Cb$-valued Schwartz-Bruhat functions on $L \otimes \Abf$ with support on $L^\vee \otimes \Zhat$ and that are translation-invariant under $L \otimes \Zhat$.
It is a $\Cb$-vector space of dimension $|L^\vee/L| = D \kappa$. 
There is the usual Weil representation $\omega$ of $\SL_2(\Abf)$ on $\Sc(V(\Abf))$ with respect to the standard additive character $\psi_\mathrm{f}$ of $\Abf$. It becomes the Weil representation $\rho_L$ in \cite{Borcherds98} when restricted to the subspace $\Sc_L$ \cite{Kudla03}.
To define Hecke's theta function, we begin with a theta kernel with value in $\Sc_{L}$
\begin{equation}
  \label{eq:Thetaaf}
  \Theta_{L}(\tau, t) := \sqrt{v} \sum_{\lambda \in L^\vee} 
(\lambda, Z^\perp(t))_{F, N} \ebf \lp \frac{(\lambda, Z^\perp(t))^2_{F, N}}{4} \tau - \frac{(\lambda, Z(t))^2_{F, N}}{4} \overline{\tau} \rp \varphi_\lambda
,
\end{equation}
where $(\cdot, \cdot)_{F, N}$ is the bilinear form associated to $\Nm/N$ and $\varphi_\lambda$ is the characteristic function of $\lambda + L \otimes \Zhat \subset V_{F, N}(\Abf)$.
It is $\Gamma^+_\kappa$ invariant as a function of $t \in \Dc_{F, N}^+$, and transforms in $\tau \in \Hc$ with respect to the Weil representation $\omega$ of weight 1 on $\Gamma$.
Integrating $\Theta_{L}(\tau, t) {dt}{}$ then defines a weight one modular form $\vartheta_{L}(\tau)$ with the following Fourier expansion (see \cite{Hecke26, CL16})
\begin{equation}
  \label{eq:vartheta}
  \vartheta_{L}(\tau) := \int_{\Gamma^+_\kappa \backslash \Dc_{F, N}^+} \Theta_{L}(\tau, t) {dt}{} 
=
\sum_{\begin{subarray}{c} \lambda \in \Gamma^+_\kappa \backslash L^\vee \\ \Nm(\lambda) > 0 \end{subarray}} \sgn(\lambda) \ebf \lp \frac{\Nm(\lambda)}{N} \rp
\varphi_\lambda.
\end{equation}
From this, it is clear that $\vartheta_{L} \in S_1(\Gamma, \rho_L) \otimes \Sc_{L}$, and that it only depends on the class of $\af$ in the narrow class group $\Cl^+(F)$ and the integer $\kappa$.
Finally, we can identify $\Sc_L$ with $\Cb^{D\kappa}$ by sending $\varphi \in \Sc_L$ to $(\varphi(\mu))_{\mu \in L^\vee/L}$, and let $\langle \cdot, \cdot \rangle$ be the standard Hermitian pairing induced by the $L^2$-norm $| \cdot |$. 
Then $v \left| \vartheta_L(\tau) \right|^2$ is an integrable, $\Gamma$-invariant function on $\Hc$ and the Petersson norm of $\vartheta_L(\tau)$ is defined as in Eq.\ \eqref{eq:PeterssonNorm}.

\subsection{Lattice in $V$.}
\label{subsec:lattice}
From the lattice $L = L_{\af, \kappa}$ in the previous section, we can construct the following unimodular lattice in $V$ 
$$
M = M_{\af, \kappa} := \left\{ \smat{\lambda_1}{\lambda_2}{\lambda_2'}{\lambda_1'} \in M_2(L^\vee): 
\lambda_1 - \lambda_2 \in L
\right\}.
$$
Let $\Gamma_M$ be its discriminant kernel and $\Gamma_M^+$ the subgroup fixing $\Dc^+$.
Then $\Gamma_M^+$ contains $(\Gamma_\kappa^+)^2$.
As in section 1.3 of \cite{Kudla16}, we can define a sublattice $P = P_U + P_{U^\vee} \subset M$ by
\begin{equation}
  \label{eq:P}
  \begin{split}
      P_U &:= M \cap U = \left\{ \pmat{\mu}{-\mu}{-\mu'}{\mu'} : \mu \in L^\vee \cap \frac{1}{2} L \right\}, \\
P_{U^\vee} &:= M \cap U^\vee = \left\{ \pmat{\mu}{\mu}{\mu'}{\mu'} : \mu \in L^\vee \right\}.
  \end{split}
\end{equation}
Via the map $\smat{\lambda_1}{\lambda_2}{\lambda'_1}{\lambda'_2} \mapsto \lambda_1 - \lambda_2$, the group $M/P$ is isomorphic to $L/(2L^\vee \cap L)$, which is then isomorphic to $(\Zb/2 \Zb)^2$ if $2 \nmid D\kappa$ and trivial otherwise.
In the former case, $P^\vee = \frac{1}{2} P$.

Suppose $L \cap 2 L^\vee$ has the $\Zb$-basis $\{a \sqrt{D} + b, d\}$ with $a, b, d \in \Qb$ and $a , d > 0$. Then $\{\tef_1, \tef_2\} = \{a \ef_1 + b \ef_2, d \ef_2\}$ is a $\Zb$-basis of $P_U$. 
We will denote 
\begin{equation}
\label{eq:gL}
g_L := \pmat{a}{b}{0}{d} \in \GL_2^+(\Qb).
\end{equation}
Note that
\begin{equation}
  \label{eq:detgL}
2  \det(g_L) = \Nm(\af \cap 2 (\kappa \df)^{-1}\af).
\end{equation}
In addition, we also define $g_\kappa \in \GL^+_2(\Qb)$ by
\begin{equation}
  \label{eq:gkappa}
  g_\kappa := 
  \begin{cases}
    \smat{2/(D\kappa)}{}{}{1} & 2 \mid D\kappa,\\
\smat{1/(D \kappa)}{1}{}{2} & 2 \nmid D \kappa.
  \end{cases}
\end{equation}
Then $g_\kappa \cdot \binom{\sqrt{D}/2}{1/2}$ is a $\Zb$-basis of the inverse different $\df_\kappa^{-1}$ of the order $\kappa \Zb + \frac{\kappa^2 D + \kappa\sqrt{D}}{2} \Zb$.

For an element $\lambda \in V$, suppose $\lambda_U = r, \lambda_{U^\vee} = \eta_1$ with respect to the basis $\{\tef_1, \tef_2, \tef^\vee_1, \tef^\vee_2\}$, then $\lambda_{U^\vee} \cdot P_U \subset \Zb$ if and only if $\eta_1 \in \Zb^2$. 
We use $\varphi_\lambda \in \Sc(V(\Abf))$ to denote the characteristic function of the coset $\lambda + P \otimes \Zhat$.

\begin{exmp}
\label{example:1}
Suppose $2 \nmid D \kappa$ and $\af = \Zb \frac{\sqrt{D} + B}{2} + \Zb A$ with $A$ odd, i.e.\ $a = 1, b = B, d = 2A$.
Then $\det(g) = 2A = 2 \Nm(\af)$ and $\kappa = A/N$.
Any $\lambda \in P^\vee/P$ can be written as $\lambda_U + \lambda_{U^\vee}$ with $\lambda_U \in \half P_U / P_U, \lambda_{U^\vee} \in \half P_{U^\vee}/P_{U^\vee}$.
Suppose $\lambda \in M/P$, then $\lambda_U \in P_U$ if and only if $\lambda_{U^\vee} \in P_{U^\vee}$.
Therefore the map $\lambda \mapsto \lambda_U$ (resp.\ $\lambda \mapsto P_{U^\vee}$) induces an isomorphism from $M/P$ to $\half P_U/P_U$ (resp.\ $\half P_{U^\vee}/P_{U^\vee}$).
For $\mu \in \half P_U/P_U$, let $\mu^\vee \in \half P_{U^\vee}/P_{U^\vee}$ be its image under the isomorphism above.
Let $\varphi_M \in \Sc(V(\Abf))$ be the characteristic function of $M$. Then it can be rewritten as
\begin{equation}
\label{eq:varphiM1}
\varphi_M(x) = \sum_{\lambda \in M/P} \varphi_\lambda(x) = 
\sum_{\mu \in \half P_U/P_U} \varphi_{\mu}(x_U) \varphi_{\mu^\vee}(x_{U^\vee}).
\end{equation}
The basis $\{\tef_1, \tef_2, \tef_1^\vee, \tef_2^\vee\}$ is given by
$$
\tef_1 = \smat{\frac{\sqrt{D} + B}{2}}{- \frac{\sqrt{D} + B}{2}}{\frac{\sqrt{D} - B}{2}}{-\frac{\sqrt{D} + B}{2}}, \;
\tef_2 = A \smat{1}{-1}{-1}{1}, \;
\tef_1^\vee = \frac{A\sqrt{D}}{2D \kappa} \smat{-1}{-1}{1}{1}, \;
\tef_2 = \smat{\frac{B\sqrt{D} + D}{4D \kappa}}{\frac{B\sqrt{D} + D}{4D \kappa}}{\frac{-B\sqrt{D} + D}{4D \kappa}}{\frac{-B\sqrt{D} + D}{4D \kappa}}.
$$
We can identify $\half P_U / P_U$, resp.\ $\half P_{U^\vee} / P_{U^\vee}$, with $(\half\Zb/\Zb)^2$, resp.\ $(\Zb/2\Zb)^2$, via the bases $\{\tef_1, \tef_2\}$, resp.\ $\{\tef_1^\vee, \tef_2^\vee\}$.
Under this identification, we have $\mu^\vee = \binom{0}{0}, \binom{1}{0}, \binom{0}{1}, \binom{1}{1}$ when $\mu = \binom{0}{0}, \binom{0}{1/2}, \binom{1/2}{0}, \binom{1/2}{1/2}$ respectively.
\end{exmp}

\subsection{Weil Representation and Theta Distribution.}
We quickly recall the Weil representation following \S 4.1 in \cite{Kudla16}.
Recall that $\Gamma = \SL_2(\Zb)$ and $\omega$ is the Weil representation of $\SL_2(\Ab)$ acting on the space of Schwartz functions $\Sc(V(\Ab))$, which comes from the usual polarization $V \otimes W = V\otimes X + V \otimes Y$ for $W = X + Y$ a 2-dimensional symplectic vector space.
On the other hand, the Witt decomposition $V = U + U^\vee$ gives another polarization $V \otimes W = U \otimes W + U^\vee \otimes W$, which provides another model of the Weil representation on $\Sc(U^\vee \otimes W(\Ab))$.
For a choice of basis, we can identify $U^\vee(\Ab)$ and $U(\Ab)$ with $\Ab^2$ and view $\varphi  \in \Sc(V(\Ab))$ as a function in $( \eta_1, r)$ with $\eta_1, r \in \Ab^2$.
Let $\psi$ be the standard additive character on $\Ab/\Qb$ that is trivial on $\Zhat$ and restricts to $x \mapsto \ebf(x)$ on $\Rb$.
For $\varphi \in \Sc(V(\Ab))$, we can define its partial Fourier transform
\begin{equation}
  \label{eq:partialFT}
 \widehat{\varphi}(\eta) := \int_{\Ab^2} \varphi(\eta_1, r) \psi(\eta_2 \cdot r) dr,
\end{equation}
where $\eta = [\eta_1 \; \eta_2] \in M_2(\Ab) = \mathrm{Hom}(U^\vee, W)(\Ab) = U^\vee \otimes W(\Ab)$.
This defines an intertwining map from $\Sc(V(\Ab))$ to $\Sc(U^\vee\otimes W(\Ab))$ with respect to $\omega$, i.e.
$$
\widehat{\omega(g) \varphi}(\eta) = \widehat{\varphi}(\eta \cdot g)
$$
for $g \in \SL_2(\Ab)$ and $\eta \in M_2(\Ab)$.
There is an equality of theta distribution
\begin{equation}
  \label{eq:theta_dist}
  \sum_{x \in V(\Qb)} \varphi(x) = \sum_{\eta \in M_2(\Qb)} \widehat{\varphi}(\eta) = \sum_{\eta \in  M_2(\Qb)/ \Gamma} \sum_{g \in \Gamma_\eta \backslash \Gamma} \widehat{\varphi}(\eta \cdot g),
\end{equation}
with $\Gamma_\eta \subset \Gamma$ the stabilizer of $\eta \in  M_2(\Qb) / \Gamma$.

Let $g_\tau = n(u) m(\sqrt{v}) \in \SL_2(\Ab)$ be the element corresponding to $\tau = u + iv \in \Hc$. 
It acts on the theta distribution and defines a distribution on $S(V(\Abf))$ 
\begin{equation}
  \label{eq:theta}
  \begin{split}
      \theta(\tau, w, \varphi_\infty)(\varphi_\mathbf{f}) :=
 \sum_{x \in V(\Qb)} \omega(g_\tau) \varphi_\infty(x, w) \varphi_\mathbf{f}(x),
  \end{split}
\end{equation}
where $\varphi_\infty \in S(V(\Rb)) \otimes A^0(\Dc)$.
For a function $f$ on $\Gamma \backslash \Hc$ valued in $\Sc(V(\Abf))$ having mild growth near the cusp, the pairing $\langle f, \theta(\tau, w, \varphi_\infty) \rangle$ is still $\Gamma$-invariant and can be integrated on $\Gamma \backslash \Hc$ to define
\begin{equation}
  \label{eq:Phi}
  \Phi(w; \varphi_\infty, f) := \int_{\Gamma \backslash \Hc} \langle f, \theta(\tau, w, \varphi_\infty) \rangle d \mu(\tau).
\end{equation}
When $f$ has exponential growth at the cusp, the theta integral above can be regularized and is a special case considered in \cite{Borcherds98} (also see \S 1 in \cite{Kudla03}).

\subsection{Cycle Integral of Theta Lift.}
Suppose $M = M_{\af, \kappa}$ and  $w = w(t, t_0)$ as above. Then $\Phi(w; \varphi_\infty, f)$ is also invariant when $\varep \in \Gamma_\kappa$ acts on $t \in \Rb$ via translation by $\log \varep$.
Integrating it with respect to the invariant measure ${dt}$ over a fundamental domain $[0, \log \varepsilon_\kappa)$ of $\Gamma_\kappa \backslash \Dc_F^+$ defines 
\begin{equation}
  \label{eq:I}
  I(t_0; \varphi_\infty, f) := \int_{0}^{\log \varep_\kappa} \Phi(w(t, t_0) ; \varphi_\infty, f) dt.
\end{equation}
Let $\varphi_M \in \Sc(V(\Abf))$ be the characteristic function of $M \otimes \hat{\Zb}$ and
\begin{equation}
  \label{eq:It0}
 \phi(x, w(t, t_0)) := (x, X(t)) (x, Y^\perp(t_0)) e^{-\pi (x, x)_w}.
\end{equation}
In this case, we denote
\begin{equation}
  \label{eq:Theta}
\begin{split}
  \Theta_M(\tau, t, t_0; \phi) &:= v^{-2} \langle \varphi_M, \theta(\tau, w(t, t_0), \phi) \rangle = \sum_{\llam \in M} \varphi_{\tau, w}(\lambda), \\
\varphi_{\tau, w}(\lambda) &:= \omega(g_\tau) \phi(\lambda, w(t, t_0)) = (\llam, X(t)) (\llam, Y^\perp(t_0)) \ebf( Q(\llam_{\wpp}) \tau + Q(\llam_w) \overline{\tau}).
\end{split}
\end{equation}
Notice that $Q(\llam_\wpp ) \tau + Q(\llam_w) \overline{\tau} = Q(\llam) u + Q_w(\llam) iv$ with $\tau = u + iv$. 
Then
\begin{equation}
  \label{eq:It0explicit}
  I(t_0; \phi, \varphi_M) = \int^{\log \varep_\kappa}_0 \int_{\Gamma\backslash \Hc} v^2 \Theta_M(\tau, t, t_0; \phi) d\mu(\tau) dt.
\end{equation}
The first result concerning this function of $t_0$ is as follows.
\begin{prop}
  \label{prop:constant}
  The function $I(t_0; \phi, \varphi_{M_{\af, \kappa}})$ is a constant that only depends on the class of $\af$ in the narrow class group $\Cl^+(F)$ of $F$ and the positive integer $\kappa$.
\end{prop}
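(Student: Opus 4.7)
The plan is to compute $I(t_0)$ by Fourier expansion of $\Theta_M$ along the one-dimensional cusp of the $\mathrm{O}(2,2)$-Shimura variety attached to $V$, using the Witt decomposition $V = U + U^\vee$ in the spirit of \cite{Kudla16, Gritsenko12}. First I would apply the theta distribution identity \eqref{eq:theta_dist} to write $\Theta_M(\tau, t, t_0; \phi) = \sum_{\eta \in M_2(\Qb)} \widehat{\varphi_{\tau, w} \cdot \varphi_M}(\eta)$ as a sum of partial Fourier transforms indexed by $\eta \in M_2(\Qb)$. Using the intertwining property of the Weil representation together with the $\Gamma$-invariance of $\widehat{\varphi_M}$ (a consequence of the unimodularity of $M$), I can organize this sum by the right $\Gamma$-orbits on $M_2(\Qb)$ and unfold $\int_{\Gamma\backslash\Hc}$ along each orbit to $\int_{\Gamma_\eta\backslash\Hc}$.

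Next, I would treat the three orbit types separately. For the trivial orbit $\eta = 0$, the partial Fourier transform $\widehat{\phi}(0; w) = \int_{U(\Rb)} \phi(r, w)\,dr$ is independent of $\tau$, so its contribution is the hyperbolic volume of $\Gamma\backslash\Hc$ times this Gaussian integral. For rank-one orbits ($\det\eta = 0$, $\eta \neq 0$), the stabilizer $\Gamma_\eta$ is a conjugate of $\Gamma_\infty$, and unfolding reduces the domain to the strip $[0,1) \times (0,\infty)$; the $u$-integral singles out an arithmetic subfamily through the factor $\widehat{\varphi_M}(\eta)$, while the $v$-integral becomes an explicit Gaussian. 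For rank-two orbits ($\det\eta \neq 0$), the stabilizer is trivial or finite cyclic, and the integrand decays rapidly enough to allow direct evaluation.

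The crucial last step is the $t$-integration over $[0, \log\varep_\kappa)$. The $\Gamma_\kappa^+$-action on $M$ (induced from its action on $L$) permutes the orbit representatives in each class above, compatibly with the translation $t \mapsto t + \log\varep_\kappa$ on the archimedean parameter. This allows me to unfold $\int_0^{\log\varep_\kappa}\sum_{[\eta]}$ into $\sum_{[\eta]/\Gamma_\kappa^+}\int_{\Rb} dt$, reducing the calculation to explicit Gaussian integrals in $t \in \Rb$ whose values manifestly do not depend on $t_0$. The assertion that $I(t_0)$ depends only on the narrow class of $\af$ then follows because scaling $\af \mapsto \alpha\af$ by a totally positive $\alpha \in F^\times$ induces a compatible isometry of $L_{\af,\kappa}$ and $M_{\af,\kappa}$ that preserves the $\Gamma_\kappa^+$-action and the cycle integral.

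The main obstacle will be verifying that the rank-two orbit contributions do not introduce any residual $t_0$-dependence after $t$-integration. Here the specific form of the Schwartz function $\phi = (x, X)(x, Y^\perp) e^{-\pi (x,x)_w}$ is essential: its odd polynomial prefactor $(x, X(t))(x, Y^\perp(t_0))$ pairs $x$ against one vector depending only on $t$ and another depending only on $t_0$, so after the $\Gamma_\kappa^+$-unfolding and a change of variables the $t$-integral produces Gaussian identities and parity cancellations that collapse the $t_0$-dependence. Careful bookkeeping of the Iwasawa decomposition and of the factor $\widehat{\varphi_M}$ arising from the finite places should then complete the argument.
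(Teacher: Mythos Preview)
Your approach is genuinely different from the paper's, and considerably more laborious. The paper proves $I(t_0)$ is constant by a short differential argument: it introduces auxiliary archimedean Schwartz functions $\phi_1(x,w) = (x,X)(x,Y)e^{-\pi(x,x)_w}$ and $\phi_2(x,w) = (x,X^\perp)(x,Y)e^{-\pi(x,x)_w}$, checks the identity
\[
v^2\,\partial_{t_0}\Theta_M(\tau,t,t_0;\phi)\,d\mu(\tau)\,dt \;=\; d\Bigl(2v\,\Theta_M(\tau,t,t_0;\phi_1)\,d\tau\,dt - v^2\,\Theta_M(\tau,t,t_0;\phi_2)\,d\mu(\tau)\Bigr),
\]
and applies Stokes' theorem on $(\Gamma\times\Gamma_\kappa)\backslash(\Hc\times\Dc_F^+)$ to conclude $\partial_{t_0}I(t_0)=0$. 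The dependence only on the narrow class is then a one-line check that $\Theta_{M_{\af,\kappa}} = \Theta_{M_{(\mu)\af,\kappa}}$ for totally positive $\mu$.

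What you propose is essentially the Fourier-expansion computation that the paper carries out in Section~3---but there it is done \emph{only} at $t_0=0$, precisely because Proposition~\ref{prop:constant} has already reduced the problem to that slice. Running that machinery for arbitrary $t_0$ means computing $\widehat{\varphi_{\tau,w(t,t_0)}}(\eta)$ in full generality (Lemma~\ref{lemma:partial} is stated only at $t_0=0$, where $z_1$ and $z_2$ are both determined by the single function $z(t)$); the formulas for general $t_0$ involve $z_1(t,t_0)$ and $z_2(t,t_0)$ separately and are substantially messier. More seriously, two steps in your outline are not justified: the rank-one orbital sum is not absolutely convergent (the paper regularizes with the parameter $s$ and uses the Kronecker limit formula), so unfolding and the interchange of sum and integral need care; and your claimed $\Gamma_\kappa^+$-unfolding of $\int_0^{\log\varepsilon_\kappa}\sum_{[\eta]}$ to $\sum_{[\eta]/\Gamma_\kappa^+}\int_\Rb$ presupposes a clean compatibility between the $\Gamma_\kappa^+$-action on $M$ and the right $\Gamma$-orbit decomposition of $M_2(\Qb)$ that you have not established. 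Even granting all this, the assertion that the resulting $t$-integrals are ``manifestly'' independent of $t_0$ is exactly the content of the proposition and would still require a nontrivial identity among Gaussian integrals. The Stokes argument bypasses all of this in a few lines.
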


\begin{proof}
{  Since $I(t_0; \phi, \varphi_M)$ is differentiable in $t_0$, it suffices to show that $\partial_{t_0} I(t_0) = 0$.
To see this, consider the following elements in $\Sc(V(\Rb))$
\begin{align*}
  \phi_1(x, w(t, t_0)) &:= (x, X(t)) (x, Y(t_0)) e^{-\pi (x, x)_w},  \;
  \phi_2(x, w(t, t_0)) := (x, \Xp(t)) (x, Y(t_0)) e^{-\pi (x, x)_w}.
\end{align*}
Notice that
$$
\partial_{t} X(t) = \Xp(t), \partial_{t} \Xp(t) = X(t), \partial_{t} Y(t) = \Yp(t), \partial_{t} \Yp(t) = Y(t).
$$
From this, it is easy to check that $\partial_{t_0} Q_w(\llam) = (\llam, Y(t_0))(\llam, \Yp(t_0))$.
Therefore, we have
$$
\partial_{t_0} \Theta_M(\tau, t, t_0; \phi) = 
\sum_{\llam \in M}
(1 - 2\pi v (\llam, \Yp(t_0))^2)
 (\llam, X(t)) (\llam, Y(t_0)) \ebf( Q(\llam_{\wpp}) \tau + Q(\llam_w) \overline{\tau}),
$$
which implies
$v^2 \partial_{t_0} \Theta_M(\tau, t, t_0; \phi) d\mu(\tau) dt = 
d \omega$ with
$$
\omega :=
 2 v \Theta_M(\tau, t, t_0; \phi_1) d\tau dt - v^2 \Theta_M(\tau, t, t_0; \phi_2) d\mu(\tau) .
$$
It is clear that $\omega$ is a $(\Gamma \times \Gamma_\kappa)$-invariant 2-form on $\Hc \times \Dc_F^+$.
Applying Stokes' Theorem then proves that $\partial_{t_0} I(t_0) = 0$.}
It is now easy to check from the definition that $\Theta_{M_{\af, \kappa}} = \Theta_{M_{\af(\mu), \kappa}}$ for a totally positive element $\mu \in \Oc_F$. Therefore, $I(t_0; \phi, \varphi_{M_{\af, \kappa}})$ only depends on the class of $\af$ in $\Cl^+(\Oc_F)$ and $\kappa$.
\end{proof}

\begin{rmk}
\label{rmk}
  For $\Ac \in \Cl^+(\Oc_F)$ and $\kappa \in \Nb$, the proposition above implies that the quantity
  \begin{equation}
    \label{eq:invariant}
    \tPsi(\Ac, \kappa) := I(0; \phi, \varphi_{M_{\af, \kappa}}) =
\int^{\log \varepsilon_\kappa}_0 \Phi(t; \af, \kappa) dt 
  \end{equation}
is well-defined with $\af$ any representative of $\Ac$. Here we denote $\Phi(t; \af, \kappa) := \Phi(w(t, 0); \phi, \varphi_{M_{\af,\kappa}})$.
\end{rmk}

From \eqref{eq:iotaw}, we know that when restricting to $t_0 = 0$, the image of $ D_F^+ \times \{0\}$ lies on the twisted diagonal $z_1 = D \kappa z_2 /\Nm(\af)$ in $\Gamma_M^+ \backslash \Hc^2$. On this embedded $\Hc$, the image of $\Gamma_\kappa \backslash \Dc^+_F$ is the geodesic connecting $\sqrt{D} i$ and $\sqrt{D} \frac{-\beta\sqrt{D} + i}{\alpha}$ if $\varep_\kappa = \alpha + \beta\sqrt{D}$. Therefore, $I(0; \phi, \varphi_M)$ can be considered as the cycle integral of the (twisted) diagonal restriction of $\Phi(w; \phi, \varphi_M)$.

The second result of this section relates the Petersson norm of $\vartheta_L$ to the quantity $\tPsi(\Ac, \kappa)$.

\begin{prop}
  \label{prop:norm}
For an integral ideal $\af \subset \Oc_F$ and positive integer $\kappa$, let $L = L_{\af, \kappa}, \varep_\kappa \in \Oc_F^\times$ and $\vartheta_L \in S_1(\Gamma, \rho_L) \otimes \Sc_L$ be as in \S \ref{subsec:Hecke_Theta}. 
Then $\| \vartheta_L(\tau) \|^2_{\mathrm{Pet}} = \tPsi([\af \df] , \kappa) \log \varepsilon_\kappa$.
\end{prop}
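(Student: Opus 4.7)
The plan is to unfold the Petersson norm as a double integral over $(t, t_0) \in [0, \log \varep_\kappa)^2$ and reduce to the cycle integral $\tPsi$ via Proposition \ref{prop:constant}. Substituting \eqref{eq:vartheta} and its complex conjugate for $\vartheta_L$ and $\overline{\vartheta_L}$ gives
\begin{equation*}
\|\vartheta_L\|_{\mathrm{Pet}}^2 = \int_{\Gamma\backslash\Hc} v \int_0^{\log\varep_\kappa} \int_0^{\log\varep_\kappa} \langle \Theta_L(\tau, t), \Theta_L(\tau, t_0)\rangle\, dt\, dt_0\, d\mu(\tau).
\end{equation*}
Since $\vartheta_L$ is a weight-one cusp form, $v|\vartheta_L(\tau)|^2$ decays rapidly at the cusp; together with the continuity of $\Theta_L(\tau, t)$ in the compact parameter $t$, this makes the triple integrand absolutely integrable, so Fubini lets us move the $(t, t_0)$-integrations outside and focus on the inner integral $\Psi(t, t_0) := \int_{\Gamma\backslash\Hc} v \langle \Theta_L(\tau, t), \Theta_L(\tau, t_0)\rangle d\mu(\tau)$.

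The essential step is to identify $v\langle \Theta_L(\tau, t), \Theta_L(\tau, t_0)\rangle$ with the $(2,2)$-theta function $v^2 \Theta_{M_{\af\df, \kappa}}(\tau, t, t_0; \phi)$ of the unimodular lattice $M_{\af\df, \kappa} \subset V$. Expanding the Hermitian pairing using \eqref{eq:Thetaaf}, the sum over $\mu \in L^\vee/L$ of $\Theta_L(\tau, t)(\mu)\overline{\Theta_L(\tau, t_0)(\mu)}$ collects into a single sum over pairs $(\lambda_1, \lambda_2) \in L^\vee \times L^\vee$ with $\lambda_1 - \lambda_2 \in L$. The isometric rescaling $\sigma: V_{F, N} \to V_{F, DN}$, $\sigma(\lambda) = \sqrt{D}\lambda$, sends $L_{\af, \kappa}$ onto $L_{\af\df, \kappa}$ (and their duals onto each other), so after reindexing the sum runs over $M_{\af\df, \kappa}$. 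The pairings $(\lambda, Z^\perp(t))_{F, N}$ appearing in $\Theta_L$ translate, under the embeddings $V_{F, DN} \oplus -V_{F, DN} \hookrightarrow V$, into the $V$-level pairings $(\llam, X(t))$ and $(\llam, Y^\perp(t_0))$ defining $\phi$ in \eqref{eq:It0}. The conclusion is $\Psi(t, t_0) = \Phi(w(t, t_0); \phi, \varphi_{M_{\af\df, \kappa}})$.

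By Proposition \ref{prop:constant} applied to the integral ideal $\af\df$, the inner $t$-integral equals the constant $I(t_0; \phi, \varphi_{M_{\af\df, \kappa}}) = \tPsi([\af\df], \kappa)$, independent of $t_0$. Integrating over $t_0 \in [0, \log \varep_\kappa)$ then yields $\|\vartheta_L\|_{\mathrm{Pet}}^2 = \tPsi([\af\df], \kappa) \log \varep_\kappa$.

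The main obstacle is the identification of the integrand with the theta function attached to $M_{\af\df, \kappa}$ rather than the naive $M_{\af, \kappa}$ (which is the unimodular lattice containing $L \oplus -L$). The shift of class from $[\af]$ to $[\af\df]$ reflects that $L^\vee = (\kappa\df)^{-1}\af$ itself lies in the class $[\af\df] \in \Cl^+(F)$, and the rescaling by $\sqrt{D}$ provides the isometry realizing this shift at the level of $(2, 2)$-lattices. Equally delicate is matching the Schwartz function produced from the pairings $(\lambda, Z^\perp)_{F, N}$ in $\Theta_L$ with the specific $\phi$ of \eqref{eq:It0} used to define $\tPsi$, which requires carefully tracking the decomposition $V = V_{F, DN} \oplus -V_{F, DN}$ and the orthogonal basis $\{X(t), Y(t_0), \Xp(t), \Yp(t_0)\}$ of $V(\Rb)$.
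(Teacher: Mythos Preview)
Your approach is essentially the paper's: unfold the Petersson norm into a double integral over $(t,t_0)$, identify the integrand with the $(2,2)$-theta attached to $M_{\af\df,\kappa}$ and the archimedean Schwartz function $\phi$, then invoke Proposition~\ref{prop:constant}. The paper phrases the middle step slightly differently: the naive unfolding gives $\Theta_{M_{\af,\kappa}}(\tau,t,t_0;\phi_2)$ with $\phi_2(x,w)=(x,\Xp(t))(x,Y(t_0))e^{-\pi(x,x)_w}$ (since $(\lambda_1,Z^\perp(t))_{F,N}=(\llam,\Xp(t))$ and $(\lambda_2,Z^\perp(t_0))_{F,N}=-(\llam,Y(t_0))$ under the direct embedding), and then records the identity $\Theta_{M_{\af,\kappa}}(\phi_2)=\Theta_{M_{\af\df,\kappa}}(\phi)$.

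One correction: your map $\sigma(\lambda)=\sqrt{D}\,\lambda$ is not an isometry $V_{F,N}\to V_{F,DN}$ but an \emph{anti}-isometry, because $\Nm(\sqrt{D})=-D$. This is not a bug but the mechanism that makes your identification work: multiplication by $\sqrt{D}$ interchanges the roles of $Z(t)$ and $Z^\perp(t)$ (up to sign), which is exactly what converts the polynomial $(x,\Xp(t))(x,Y(t_0))$ of $\phi_2$ into the polynomial $(x,X(t))(x,\Yp(t_0))$ of $\phi$. Once you state it this way, your rescaling is precisely the content of the paper's identity $\Theta_{M_{\af,\kappa}}(\phi_2)=\Theta_{M_{\af\df,\kappa}}(\phi)$.
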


\begin{proof}
  If we unfold the definition of $\vartheta_L$ and its Petersson norm in Eqs.\ \eqref{eq:vartheta} and \eqref{eq:PeterssonNorm}, then it is clear that 
$$
\| \vartheta_L(\tau) \|^2_{\mathrm{Pet}} = 
\int^{\log \varepsilon_\kappa}_0\int^{\log \varepsilon_\kappa}_0
\int_{\Gamma \backslash \Hc} v^2 \Theta_{M}(\tau, t, t_0; \phi_2) d\mu(\tau)
dt dt_0,
$$
where $\phi_2$ is defined in the proof of Prop.\ \ref{prop:constant}. 
The proposition now follows from the fact $\Theta_{M_{\af, \kappa}}(\tau, t, t_0; \phi_2) = \Theta_{M_{\af \df, \kappa}}(\tau, t, t_0; \phi)$ and Prop.\ \ref{prop:constant}.
\end{proof}

\subsection{Rademacher symbol.}
\label{subsec:Rademacher}
For each $\gamma = \smat{a}{b}{c}{d} \in \Gamma = \SL_2(\Zb)$, Rademacher defined in \cite{Rademacher56} the following function 
  \begin{equation}
    \label{eq:Psi}
    \Psi(\gamma) = 
    \begin{cases}
\frac{b}{d}, & \text{for } c = 0, \\
\frac{a + d}{c} - 12 \, \sgn(c) \cdot s(a, |c|) - 3\, \sgn(c(a + d)),      &\text{for }c \neq 0,
    \end{cases}
  \end{equation}
where $s(h, k) := \sum_{\mu \bmod k} (\!(\frac{\mu}{k} )\!)  (\!(\frac{\mu h}{k} )\!) $ is the Dedekind sum with $ (\!(x )\!)  :=  x - \lfloor x \rfloor - \half $ if $x \in \Rb \backslash \Zb$ and zero otherwise.
He showed that $\Psi$ is integer-valued, 
invariant under conjugation by elements in $\Gamma$ and 
\begin{equation}
\label{eq:Psieq}
\Psi(\gamma) = \Psi(-\gamma) = -\Psi(\gamma^{-1}) = \det(g) \Psi(g \gamma g^{-1})
\end{equation}
for any $g \in \GL_2(\Zb)$.
We call $\Psi$ the Rademacher symbol. 
It is closely related to the transformation formula of the Dedekind eta function and appears in many places in mathematics (see e.g. \cite{Atiyah87, Za75}).

An element $\gamma = \smat{a}{b}{c}{d} \in \Gamma$ is called \textit{hyperbolic} if $|\tr( \gamma)| > 2$. 
It fixes the semi-circle $C_\gamma := \{ z \in \Hc: |2c z - (a - d)|^2 = (a + d)^2 - 4 \}$.
Define the weight 2 real-analytic Eisenstein series $E^*_2(z)$ by
\begin{equation}
  \label{eq:E2*}
E^*_2(z) :=  - \frac{3}{\pi y} + 1 - 24 \sum_{n \ge 1} \frac{\ebf(nz)}{1-\ebf(nz)}.
\end{equation}
A Theorem of Meyer \cite{Meyer57} connects the cycle integral of $E^*_2(z)dz$ with the Rademacher symbol.
\begin{thm}[see \textsection III of \cite{Za75}]
  Let $\gamma \in \Gamma$ be a hyperbolic element and $z_0 \in C_\gamma$ an arbitrary point.
Then the integral $\int^{\gamma z_0}_{z_0} E^*_2(z) dz$ is independent of $z_0$ and equals to $\Psi(\gamma)$.
\end{thm}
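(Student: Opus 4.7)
The plan is to split $E_2^*(z) = E_2(z) - \tfrac{3}{\pi y}$ into its quasi-modular holomorphic piece and its non-holomorphic completion, and treat the two integrals by different methods. First, I would observe that $E_2^*$ transforms as a weight-two modular form, so $E_2^*(z)\, dz$ is a $\Gamma$-invariant $1$-form on $\Hc$ (though not closed, since $E_2^*$ is not holomorphic). Combined with the fact that $\gamma$ maps $C_\gamma$ to itself by hyperbolic translation, this invariance immediately yields the independence of $z_0 \in C_\gamma$: for another $z_0' \in C_\gamma$, $\gamma$-invariance gives $\int_{\gamma z_0}^{\gamma z_0'} E_2^*(z)\, dz = \int_{z_0}^{z_0'} E_2^*(z)\, dz$ along $C_\gamma$, and concatenating the relevant segments of $C_\gamma$ yields $\int_{z_0'}^{\gamma z_0'} = \int_{z_0}^{\gamma z_0}$.

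Next I would evaluate $\int_{z_0}^{\gamma z_0} E_2(z)\, dz$ using the Dedekind eta function. Since $\eta'/\eta = \tfrac{\pi i}{12}E_2$, the form $E_2(z)\, dz = \tfrac{12}{\pi i}\, d\log\eta(z)$ is holomorphic and exact, hence its integral is path-independent. The classical eta transformation $\eta(\gamma z) = \varepsilon(\gamma)(cz+d)^{1/2}\eta(z)$, with multiplier $\log\varepsilon(\gamma) = \tfrac{\pi i}{12}\Psi(\gamma)$ (this is exactly where the Rademacher symbol and the Dedekind sum enter, via the explicit formula for the $\eta$-multiplier), yields
\[
  \int_{z_0}^{\gamma z_0} E_2(z)\, dz \;=\; \Psi(\gamma) + \frac{6}{\pi i}\log(c z_0 + d).
\]

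For the correction $-\tfrac{3}{\pi}\int_{z_0}^{\gamma z_0} \tfrac{dz}{y}$, the path does matter, and I would integrate along the specific geodesic $C_\gamma$. Parameterizing $C_\gamma$ as a semicircle $z(\theta) = m + re^{i\theta}$ gives $dz/y = (-1 + i\cot\theta)\, d\theta$, which breaks into a real angular piece and an imaginary piece $-2i\log|c z_0+d|$ coming from $y(\gamma z_0) = y(z_0)/|cz_0+d|^2$. The angular piece is controlled by $\theta_1 - \theta_0 = -2\arg(cz_0+d)$, which follows from the fact that $\gamma'(z_0) = (cz_0+d)^{-2}$ rotates tangent vectors to $C_\gamma$ by $-2\arg(cz_0+d)$. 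Unpacking $\tfrac{6}{\pi i}\log(cz_0+d) = -\tfrac{6i}{\pi}\log|cz_0+d| + \tfrac{6}{\pi}\arg(cz_0+d)$ and assembling everything, the two $z_0$-dependent contributions cancel exactly, leaving $\int_{z_0}^{\gamma z_0} E_2^*(z)\, dz = \Psi(\gamma)$.

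The main obstacle I anticipate is consistent bookkeeping of branches and signs: the $\eta$-multiplier $\varepsilon(\gamma)$ is only defined up to $24$th roots of unity, the branch of $\arg(cz_0+d)$ used in the tangent-rotation identity must be synchronized with the direction in which $\gamma$ translates $C_\gamma$, and all of this must be compatible with the $\sgn(c)$ and $\sgn(c(a+d))$ factors appearing in the definition \eqref{eq:Psi} of $\Psi$. A case analysis on the sign of $c$ and of $\tr(\gamma)$ is needed to confirm that the final answer lands on the integer $\Psi(\gamma)$ rather than a shifted representative.
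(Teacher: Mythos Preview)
The paper does not give its own proof of this theorem; it is stated as a known result with the reference ``see \textsection III of \cite{Za75}'' and then immediately used (via Eq.\ \eqref{eq:cycle1}) as input to the main argument. So there is no ``paper's proof'' to compare against.

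Your sketch is essentially the classical argument one finds in the cited reference. The decomposition $E_2^* = E_2 - \tfrac{3}{\pi y}$, the use of $E_2 = \tfrac{12}{\pi i}(\log\eta)'$ and the $\eta$-transformation law to pick up the Dedekind-sum contribution, and the direct computation of $\int dz/y$ along the semicircle $C_\gamma$ are exactly the standard ingredients. Your independence-of-$z_0$ argument via $\Gamma$-invariance of $E_2^*(z)\,dz$ restricted to $C_\gamma$ is correct and clean.

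One point to tighten: the identification $\log\varepsilon(\gamma)=\tfrac{\pi i}{12}\Psi(\gamma)$ is not literally correct as written. The $\eta$-multiplier is most directly related to Rademacher's $\Phi$ (i.e.\ the expression in \eqref{eq:Psi} \emph{without} the $-3\,\sgn(c(a+d))$ term), and the extra $-3\,\sgn(c(a+d))$ is precisely what emerges when you resolve the $2\pi$-ambiguity between $\theta_1-\theta_0$ and $-2\arg(cz_0+d)$ in your last step. You already flag this as the main obstacle, and it is; once the branches are fixed consistently (case-split on $\sgn c$ and $\sgn(a+d)$), the bookkeeping closes and your argument is complete.
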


Let $F = \Qb(\sqrt{D})$ and $\varepsilon_\kappa = \alpha + \beta \sqrt{D} \in \Oc_F^\times$ as in \S \ref{subsec:Hecke_Theta}.
Then 
\begin{equation}
\label{eq:gamma}
\gamma = \gamma_{D, \kappa} := \smat{\alpha}{D \beta}{\beta}{\alpha} \in \Gamma
\end{equation}
 is a hyperbolic element, and $C_\gamma = \{z \in \Hc: |z| = \sqrt{D} \}$.
Recall $z(t) := \sqrt{D} \frac{-\sh(t) + i}{\ch(t)}$ from \S \ref{subsec:Witt}, which is an isomorphism from $\Rb$ to $C_\gamma$. It is straightforward to check that
\begin{equation}
\label{eq:translate}
\gamma \cdot z(\log \varepsilon_\kappa) = \gamma \cdot \lp \sqrt{D} \frac{-\beta \sqrt{D} + i}{\alpha} \rp = z(-\log \varepsilon_\kappa).
\end{equation}
Therefore, Meyer's theorem implies that 
\begin{equation}
  \label{eq:cycle1}
  \Psi(\gamma_{D, \kappa}) = -\int^{z(\log \varepsilon_\kappa)}_{z(\log \varepsilon_\kappa^{-1})} E^*_2(z) dz.
\end{equation}
\section{Calculations}
\label{sec:calculations}
\subsection{Partial Fourier Transform.}
Fix a class $\Ac \in \Cl^+(\Oc_F)$, a representative $\af \subset \Oc_F$ of $\Ac$, a positive integer $\kappa \in \Nb$ and denote $M = M_{\af, \kappa}$.
In order to evaluate $\tPsi(\Ac, \kappa)$, we will calculate the Fourier expansion of $\Phi(w; \phi, \varphi_M)$ along the 1-dimensional boundary coming from the stabilizer of $U$. This was done in \cite{Kudla16} with $\varphi_\infty$ being the Gaussian. We follow the same calculations with $\varphi_\infty$ a polynomial times a Gaussian. 
The key step in the evaluation is a 2-dimensional partial Fourier transform. For this, we need the analogue of Lemma 4.3 in \cite{Kudla16} for $\varphi_{\tau, w} \in \Sc(V(\Rb))$ defined in Eq.\ \eqref{eq:Theta}.
The spaces $U(\Rb)$ and $U^\vee(\Rb)$ are both isomorphic to $\Rb^2$ with respect to a choice of basis.
Therefore, we view $\varphi_{\tau, w}$ as a function in $(\eta_1, r)$ with $ \eta_1, r$ column vectors in $\Rb^2$.
The goal is to calculate the partial Fourier transform of $\varphi_{\tau, w}$ defined by 
\begin{equation}
  \label{eq:partial_Fourier}
  \widehat{\varphi_{\tau, w}}(\eta) :=
\int_{\Rb^2} \varphi_{\tau, w}(\eta_1, r) \ebf(\transpose{r} \cdot \eta_2 ) dr,
\end{equation}
where $\eta = [\eta_1 \; \eta_2] \in M_2(\Rb)$.
This is the infinity component of the partial Fourier transform in \eqref{eq:partialFT}.

For the bases of $U$ and $U^\vee$, we will use $\{\tef_1, \tef_2\}$ and $\{\tef^\vee_1, \tef^\vee_2\}$ in \S \ref{subsec:Witt} with $g = g_L$ given in Eq.\ \eqref{eq:gL}.
Denote $J  := \smat{}{1}{-1}{}$, which satisfies $J = - \prescript{t}{}{J}$ and
\begin{equation}
\label{eq:etatau}
\eta_\tau := \eta \binom{\tau}{1} = \eta_1 \tau + \eta_2, \; 
\tilde{\eta}_\tau :=  g^{-1} \cdot \eta_\tau, \;
C := - J \eta_1 /\det(g) . 
\end{equation}
Recall $z(t), z_1(t, t_0), z_2(t, t_0), \tz_1(t), \tz_2(t)$ as in \eqref{eq:tz}, \eqref{eq:iotaw} and \eqref{eq:z}, which satisfy
$$
\tz_1(t, t_0) = g \cdot z_1(t, t_0), \tz_2(t) = \frac{z_2(t, 0)}{\det g}, z_1(t, 0) = \frac{D z_2(t, 0)}{N} = z(t) = \sqrt{D} \frac{-\sh(t) + i}{\ch(t)}.
$$
After setting $t_0 = 0$ and omitting the argument $t$ in all the notations above, we can express $(\llam, w), (\llam, \wpp)$ as
\begin{align*}
(\llam, w) &= \transpose{r} \cdot w_{U'} + \transpose{\eta_1} \cdot w_{U}
= 
\frac{-\zbar}{\sqrt{N} y} (cz + d) 
\lp \transpose{\eta_1} \cdot \binom{-\tz_2}{\tz_1 \tz_2} + \transpose{r} \cdot \binom{\tz_1}{1}\rp \\
& = - \frac{\overline{z_2}}{\sqrt{N}y_2}  \transpose{(r - C z_2)}\cdot g \cdot \binom{z_1}{1}, \\
(\llam, \wpp) &= \frac{\sqrt{N}}{\sqrt{D} y_2} \transpose{(r - C z_2)} \cdot g \cdot  \binom{\overline{z_1}}{1}.
\end{align*}
Then $\varphi_{\tau, w}(\llam)$ becomes
\begin{equation}
\label{eq:varphi_explicit}
\begin{split}
\varphi_{\tau, w}(\eta_1, r) = 
 \frac{-1}{\sqrt{D}y_2^{2}}
& \mathrm{Re}\lp
\overline{z_2} 
\transpose{r'} \cdot R \rp
 \mathrm{Im}\lp  \transpose{r'} \cdot \overline{R} \rp 
 \ebf \lp \tau \prescript{t}{}{r} \cdot \eta_1 \rp 
\mathrm{exp}\lp -\frac{ \pi v}{ y_1 y_2} 
 \left| \transpose{r'} \cdot R \right|^2 \rp,
\end{split}
\end{equation}
where $r' := r - C z_2$ with $R = R(z_1, g) := g \binom{z_1}{1}$.
In the notations above, we have the following lemma.
\begin{lemma}
  \label{lemma:partial}
For $t_0 = 0$ and $\eta \in M_2(\Qb)$ with columns $\eta_1$ and $\eta_2$, the function $\widehat{\varphi_{\tau, w}}(\eta)$ is given by
\begin{equation}
  \label{eq:pFT0}
\begin{split}
  \widehat{\varphi_{\tau, w}}(\eta) = - \frac{N^2 y^2}{\det(g) D^{5/2} v^3}
&\lp
\frac{D v}{2N \pi} - 
\overline{\transpose{\tilde{\eta}_{\tau}}} \cdot 
\pmat{1}{0}{-x}{0}
\cdot \tilde{\eta}_\tau 
\rp\\
&\ebf \lp  \frac{Nz \det(\eta)}{ D \det(g)}  \rp 
\exp \lp - \frac{N \pi}{D v}   \left| \transpose{\tilde{\eta}_\tau} \cdot \binom{1}{-z} \right|^2 \rp.
\end{split}
\end{equation}
\end{lemma}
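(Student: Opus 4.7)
The approach is to reduce the two-dimensional partial Fourier transform in \eqref{eq:partial_Fourier} to a standard Gaussian integral on $\Rb^2$ with a polynomial prefactor. Starting from the explicit form \eqref{eq:varphi_explicit} of $\varphi_{\tau,w}(\eta_1, r)$, I set $T(s) := \transpose{s} R$ with $R = g\binom{z_1}{1}$, so that the Gaussian factor reads $\exp(-\pi\alpha\, |T(r - Cz_2)|^2)$ with $\alpha := v/(y_1 y_2)$. Expanding
$$|T(r - Cz_2)|^2 = |T(r)|^2 - 2\,\Re(\overline{z_2}\,\overline{T(C)}\,T(r)) + |z_2|^2\,|T(C)|^2,$$
the quadratic-in-$r$ part, together with the character $\ebf(\tau \transpose{r}\eta_1)\ebf(\transpose{r}\eta_2) = \ebf(\transpose{r}\eta_\tau)$ coming from the Fourier transform, yields a Gaussian integral with a complex linear term, while the $r$-independent pieces become overall factors.

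The key input is then the classical formula
$$\int_{\Rb^2} e^{-\pi \transpose{r}(\alpha M) r + 2\pi \transpose{r}\xi}\,dr = (\alpha^2 \det M)^{-1/2}\exp\bigl(\pi \transpose{\xi}M^{-1}\xi/\alpha\bigr),$$
applied by analytic continuation in $\xi$, where $M$ is the real symmetric matrix with entries $M_{ij} := \Re(R_i \overline{R_j})$ and $\xi := i\eta_\tau + \alpha\,\Re(\overline{z_2}\,\overline{T(C)}\,R) \in \Cb^2$. The polynomial prefactor $P(r) := \Re(\overline{z_2}\,T(r - Cz_2))\,\Im(\transpose{(r - Cz_2)}\overline{R})$ is at most quadratic in $r$, and is therefore handled by differentiating the Gaussian integral twice in $\xi$. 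Two identities simplify matters: first, $\det M = (y_1 \det g)^2$ (since $\det M = \det[R_1, R_2]^2$ with $R = R_1 + iR_2$), which, when combined with $\alpha$, produces the overall prefactor $y_2/(v \det g)$; and second, for complex $\xi$,
$$\transpose{\xi}\,M\,\xi = (\transpose{\xi}R)(\transpose{\xi}\overline{R}),$$
which allows the exponent $\pi \transpose{\xi}M^{-1}\xi/\alpha$ to be split into a genuinely Gaussian piece in the variable $\tilde{\eta}_\tau := g^{-1}\eta_\tau$ and a purely oscillatory piece linear in $\det\eta$.

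The remaining task is algebraic identification. Substituting the explicit expressions $R = g\binom{z}{1}$, $C = -J\eta_1/\det g$, and the relations for $z_1, z_2$ at $t_0 = 0$ from \eqref{eq:iotaw} and \eqref{eq:iotaw0}, the Gaussian exponent should collapse to $-\tfrac{N\pi}{Dv}\bigl|\transpose{\tilde{\eta}_\tau}\binom{1}{-z}\bigr|^2$, the cross terms should assemble into the phase $\ebf\bigl(Nz\det\eta/(D\det g)\bigr)$, and the polynomial contribution should yield the bracketed expression $\tfrac{Dv}{2N\pi} - \overline{\transpose{\tilde{\eta}_\tau}}\smat{1}{0}{-x}{0}\tilde{\eta}_\tau$, where the first term records the trace of $M^{-1}$ arising from the $\xi$-independent part of the second derivative of the Gaussian integral, and the second term records the $\xi$-quadratic part. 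I expect this final algebraic collapse to be the main obstacle: the normalizations (powers of $\sqrt{D}$, $\sqrt{N}$, $\det g$, and $y$) must all align precisely, and the cancellation between the constant $|z_2|^2 |T(C)|^2$ and the completion-of-square contribution $\transpose{\xi}M^{-1}\xi/\alpha$ requires careful bookkeeping. It should help to absorb $g$ into the integration variable via $r \mapsto \transpose{g}\,r$ at the outset, so that $\tilde{\eta}_\tau$ and the Möbius image $\tz_1 = g \cdot z$ appear directly in place of $\eta_\tau$ and $z$.
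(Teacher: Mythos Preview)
The paper states this lemma without proof, noting only that it is ``the analogue of Lemma 4.3 in \cite{Kudla16}'' with a polynomial prefactor in place of the pure Gaussian. Your outline---shift by $Cz_2$, evaluate the Gaussian integral over $\Rb^2$, and recover the quadratic prefactor by differentiating in the source parameter---is exactly the standard computation that lemma in \cite{Kudla16} carries out, so your approach coincides with what the paper has in mind.

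One remark that may streamline the ``algebraic collapse'' you flag as the main obstacle: rather than working with the Gram matrix $M$ and its inverse, make the substitution $s = \transpose{g}\,r'$ (as you suggest at the end) \emph{followed by} the real linear change $(p,q) = (s_1 x + s_2,\, s_1 y)$, so that $\transpose{r'}R = p + iq$. The Gaussian then becomes $\exp\bigl(-\pi\alpha(p^2+q^2)\bigr)$, the Jacobian contributes $(\det g \cdot y)^{-1}$ directly, and the prefactor $\Re(\overline{z_2}(p+iq))\cdot(-q)$ is already diagonal in $(p,q)$; the two required Gaussian moments are then immediate and the matrix $\smat{1}{0}{-x}{0}$ drops out from undoing the change of variables in $\tilde\eta_\tau$. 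This avoids inverting $M$ and makes the bookkeeping of the constants $\sqrt{D}, N, y, \det g$ essentially mechanical.
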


\begin{prop}
  \label{prop:FT}
The theta function $\Theta(\tau, t, t_0;\phi)$ can be written as
\begin{equation}
  \label{eq:Theta_FT}
  \Theta_M(\tau, t, t_0; \phi) = \sum_{\eta \in M_2(\Qb)} \widehat{\varphi_M}(\eta) \widehat{\varphi_{\tau, w}}(\eta) 
\end{equation}
\end{prop}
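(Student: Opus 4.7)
The proposition is essentially a direct unpacking of the theta distribution identity \eqref{eq:theta_dist} applied to a suitable pure tensor Schwartz function. The plan is to write $\Theta_M$ as an automorphic sum over $V(\Qb)$ of a product of archimedean and finite Schwartz functions, apply \eqref{eq:theta_dist}, and observe that the partial Fourier transform splits multiplicatively across the places.

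Concretely, I would form the tensor $\varphi := \varphi_{\tau, w} \otimes \varphi_M \in \Sc(V(\Ab))$, where $\varphi_{\tau, w}$ sits at the archimedean place as in \eqref{eq:Theta} and $\varphi_M$ at the finite places is the characteristic function of $M \otimes \Zhat$. Since $\varphi_M(x) = 1$ when $x \in M$ and vanishes otherwise on $V(\Qb)$, the left-hand side of \eqref{eq:theta_dist} applied to $\varphi$ telescopes directly to
\[
\sum_{x \in V(\Qb)} \varphi_{\tau, w}(x)\, \varphi_M(x) = \sum_{\llam \in M} \varphi_{\tau, w}(\llam) = \Theta_M(\tau, t, t_0; \phi),
\]
by the definition in \eqref{eq:Theta}.

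The right-hand side of \eqref{eq:theta_dist} expresses this as $\sum_{\eta \in M_2(\Qb)} \widehat{\varphi}(\eta)$. The key observation is that the partial Fourier transform \eqref{eq:partialFT} integrates only in the $U^\vee$-variable $r$ and leaves $\eta_1$ fixed. Since $\varphi$ is a pure tensor and the global additive character $\psi = \psi_\infty \cdot \psi_\mathrm{f}$ factors, the integral over $\Ab^2$ splits as a product of the integral over $\Rb^2$ against $\psi_\infty$ and the integral over $\Abf^2$ against $\psi_\mathrm{f}$, yielding
\[
\widehat{\varphi}(\eta) = \widehat{\varphi_{\tau, w}}(\eta) \cdot \widehat{\varphi_M}(\eta).
\]
Substituting this factorization into the theta distribution identity gives \eqref{eq:Theta_FT}.

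The one item warranting verification is absolute convergence, which justifies both the reshuffling into a single adelic sum and the application of Poisson summation hidden in \eqref{eq:theta_dist}; this is immediate from the Gaussian factor $e^{-\pi (x,x)_w}$ in $\phi$ giving rapid decay on the archimedean side, and from $\widehat{\varphi_M}$ being compactly supported on $M_2(\Abf)$ which restricts the $\eta$-sum to a lattice in $M_2(\Qb)$. I do not anticipate a substantive obstacle here: the result is really a bookkeeping consequence of \eqref{eq:theta_dist}, and the hard work is deferred to Lemma~\ref{lemma:partial}, which computes the archimedean factor $\widehat{\varphi_{\tau, w}}(\eta)$ explicitly.
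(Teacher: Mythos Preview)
Your proposal is correct and matches the paper's own argument, which simply states that the result follows from changing the model of the Weil representation and refers to \S 4.1 of \cite{Kudla16}; you have spelled out that change of model explicitly via the theta distribution identity \eqref{eq:theta_dist} and the place-by-place factorization of the partial Fourier transform. One minor slip: in the paper's conventions $r$ is the $U$-coordinate and $\eta_1$ the $U^\vee$-coordinate (see \S\ref{subsec:lattice}), so the partial Fourier transform integrates in the $U$-variable, not the $U^\vee$-variable as you wrote---but this is purely a labeling issue and does not affect the argument.
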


This result follows from changing the model of the Weil representation (see \S 4.1 in \cite{Kudla16} for detailed discussions).
Recall the lattice $P \subset M$ defined in \eqref{eq:P} with $M/P \cong L/(L \cap 2 L^\vee)$. 
By (4.10) in \cite{Kudla16} and Example \eqref{example:1} above, we have for $\eta = [\eta_1 \; \eta_2] \in M_2(\Qb) \cong U^\vee \times W$
\begin{equation}
\label{eq:FT_finite}
\widehat{\varphi_M}(\eta) = 
\begin{cases}
\varphi_{M_2(\Zb)}(\eta)  & \mathrm{if } \; L \subset 2 L^\vee, \\
 \varphi_{M_2(\Zb)}(\eta) \ebf \lp -\frac{\det(\eta)}{ 2} \rp & \mathrm{otherwise }.
\end{cases}
\end{equation}

\subsection{Orbital Integrals.}
In this section, we will decompose the integral defining $\Phi(w(t, 0); \phi, \varphi_M)$ into orbits according to the rank of $\eta \in M_2(\Qb)$ and calculate the orbital integral
\begin{equation}
  \label{eq:Ieta}
\Phi_\eta(z, s)  := 
\int_{ \Gamma_\eta \backslash \Hc} v^{2-s} \widehat{\varphi_{\tau, w(t, 0)}}(\eta) \frac{dudv}{v^2} 
\end{equation}
for various $\eta \in M_2(\Qb) / \Gamma$.
Here, we used the notation $z = x + iy = z(t)$ as in Eq.\ \eqref{eq:z}.
The procedure now follows that of \cite{Kudla16}.

\begin{lemma}
  \label{lemma:rank2}
Suppose $g = \smat{a}{b}{0}{d} \in \GL_2^+(\Qb)$.
Let $\eta = \smat{m}{k}{0}{\alpha}\in M_2(\Qb)$ with $m > 0$ and $\alpha \neq 0$. 
Then
\begin{equation}
\label{eq:rank2}
\Phi_\eta(z, 0) 
=
\begin{cases}
-  \frac{N \ebf(m \alpha N z/(D \det(g)))}{\det(g) D} \frac{iyz}{\sqrt{D}},   & \alpha > 0, \\
-  \frac{N \ebf(m \alpha N \zbar/(D \det(g)))}{\det(g) D} \frac{-iy\zbar}{\sqrt{D}},   & \alpha < 0.
\end{cases}
\end{equation}
\end{lemma}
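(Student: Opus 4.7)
The plan is a direct evaluation of $\Phi_\eta(z,0)$ via the explicit formula for $\widehat{\varphi_{\tau,w}}(\eta)$ in Lemma \ref{lemma:partial}, performing the $u$-integral as a Gaussian and the $v$-integral via a modified Bessel function. Since $\det\eta = m\alpha \neq 0$, the matrix $\eta$ is invertible over $\Qb$, so the stabilizer $\Gamma_\eta = \{\gamma \in \Gamma : \eta\gamma = \eta\}$ is trivial; hence $\Gamma_\eta\backslash\Hc$ is the full upper half-plane, with $u$ running over $\Rb$ and $v$ over $(0,\infty)$. The factor $\exp\!\bigl(-\tfrac{N\pi}{Dv}|A\tau+B-Cz|^2\bigr)$ provides Gaussian decay in $u$ and super-exponential decay both as $v\to 0$ and $v\to\infty$, so the integral converges absolutely at $s=0$ without regularization.

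First, with $g = g_L = \smat{a}{b}{0}{d}$ and $\eta = \smat{m}{k}{0}{\alpha}$ both upper triangular, one computes
\[
\tilde\eta_\tau = g^{-1}\eta\binom{\tau}{1} = \binom{A\tau+B}{C}, \quad A = \tfrac{m}{a},\ B = \tfrac{k - b\alpha/d}{a},\ C = \tfrac{\alpha}{d},
\]
so that $AC = m\alpha/\det g$, which matches the argument of the $\ebf$-factor in Lemma \ref{lemma:partial}. The shift $u' = Au + B - Cx$ decouples the variables: the Gaussian exponent becomes $-\tfrac{N\pi}{Dv}\bigl(u'^2 + (Av-Cy)^2\bigr)$, and the polynomial prefactor unfolds as $\tfrac{Dv}{2N\pi} - u'^2 - Cx\,u' - Av(Av - ixC)$.

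The $u'$-integration hinges on a clean cancellation: using $\int_\Rb e^{-\pi N u'^2/(Dv)}\,du' = \sqrt{Dv/N}$ and $\int_\Rb u'^2\, e^{-\pi N u'^2/(Dv)}\,du' = \tfrac{Dv}{2N\pi}\sqrt{Dv/N}$, the contributions of the constant $\tfrac{Dv}{2N\pi}$ and of $-u'^2$ cancel exactly, while the linear $-Cx\,u'$ vanishes by parity. What survives is $-v(Av - ixC)\sqrt{Dv/N}\exp\!\bigl(-\tfrac{N\pi}{Dv}(Av-Cy)^2\bigr)$ times the overall constant. For the $v$-integral, I split $(Av-Cy)^2/v = A^2 v - 2ACy + C^2 y^2/v$ to factor out $e^{2N\pi ACy/D}$ and then apply the identity $\int_0^\infty v^{b-1}e^{-\alpha v - \beta/v}\,dv = 2(\beta/\alpha)^{b/2} K_b(2\sqrt{\alpha\beta})$ with $b = 1/2$ for the $Av$ piece and $b = -1/2$ for the $-ixC$ piece, using the closed form $K_{\pm 1/2}(x) = \sqrt{\pi/(2x)}\,e^{-x}$.

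Finally I reassemble. The Bessel exponential $e^{-2N\pi|AC|y/D}$ interacts differently with the extracted $e^{2N\pi ACy/D}$ in the two sign cases. When $\alpha > 0$ (so $C > 0$ and $|AC| = AC$), the two exponentials cancel, the bracket $(1 - ix/y)$ equals $-iz/y$, and the pieces combine into the first line of \eqref{eq:rank2}. When $\alpha < 0$ (so $C < 0$ and $|AC| = -AC$), a residual $e^{4N\pi ACy/D}$ combines with $\ebf(Nm\alpha z/(D\det g))$ via the identity $\bar z - z = -2iy$ to produce $\ebf(Nm\alpha\bar z/(D\det g))$, while $(1 + ix/y) = i\bar z/y$ supplies the right numerator. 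The main obstacle is purely bookkeeping: tracking $\mathrm{sgn}(C)$ through the Bessel absolute value $|AC|$ is what determines whether the Fourier coefficient is holomorphic (in $z$) or anti-holomorphic (in $\bar z$), so one must carefully pair the two sign cases with the two branches of \eqref{eq:rank2}; the remainder of the computation is mechanical.
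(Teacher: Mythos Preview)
Your proposal is correct and follows essentially the same route as the paper's own proof: trivialize $\Gamma_\eta$, insert the explicit formula for $\widehat{\varphi_{\tau,w}}(\eta)$, shift $u$ to center the Gaussian and use the exact cancellation between the constant $\tfrac{Dv}{2N\pi}$ and the $u'^2$ term, then evaluate the remaining $v$-integral via the $K_{\pm 1/2}$ (equivalently, $\int_0^\infty e^{-Aw-B/w}\,w^{-1/2}\,dw = \sqrt{\pi/A}\,e^{-2\sqrt{AB}}$) identity and split on $\mathrm{sgn}(\alpha)$. The only cosmetic differences are that the paper rescales both variables ($u' = d(mu+k)-\alpha(ax+b)$, $v'=dmv$) rather than just shifting $u$, and quotes the exponential integral directly instead of naming it as a Bessel function.
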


\begin{proof}
Since $\eta$ has rank 2, $\Gamma_\eta$ is trivial and $\Gamma_\eta \backslash \Hc = \Hc$. 
  Substituting in $\eta = \smat{m}{k}{0}{\alpha}$ gives us 
$
\eta_\tau = \binom{m \tau + k}{\alpha}
$
and
\begin{align*}
\widehat{\varphi_{\tau, w}}(\eta) = 
\frac{-N^2 y^2}{\det(g) D^{5/2} v^3} 
&\lp
\frac{D v}{2 N\pi} -
\frac{(d(m \overline{\tau} + k) - \alpha(ax + b))(d(m \tau + k) - \alpha b)}{\det(g)^2}
\rp
\ebf\lp \frac{m \alpha N  z}{D \det{g}}\rp \\
&\exp \lp -\frac{N \pi}{D v \det(g)^2} 
|d(m \tau + k) - \alpha(az + b)|^2
  \rp.
\end{align*}
Make the change of variable $u' = d(mu + k) - \alpha(ax + b)$ and $v' = dmv$ gives us
$$
(d(m \overline{\tau} + k) - \alpha(ax + b))(d(m \tau + k) - \alpha b) =
( u')^2 + (v')^2 + \alpha a x (u' - iv').
$$
Therefore, the integral defining $\Phi_\eta(z, 0)$ becomes
\begin{align*}
\Phi_\eta(z, 0) = 
\frac{-N^2 y^2 \ebf\lp \frac{m \alpha N  z}{D \det{g}}\rp }{\det(g) D^{5/2}} 
\int_0^\infty \int_{-\infty}^\infty
&\lp
\frac{D v'}{2 N\pi } -
\frac{dm((u')^2 + (v')^2 + \alpha a x(u' - iv'))}{\det(g)^2}
\rp\\
&\exp \lp -\frac{N \pi dm ( (u')^2 + (v' - \alpha a y)^2)}{D v' \det(g)^2}   \rp \frac{ du'dv'}{(v')^3}.
\end{align*}
We can apply the identities $\int_\Rb \lp \tfrac{1}{2\pi A} - B u^2 \rp \exp(-\pi A B u^2) du = 0, \int_\Rb \exp(-\pi A u^2) du = 1/\sqrt{A}$ 
and $\int^\infty_0 e^{-A w - B/w} dw /\sqrt{w}  = \sqrt{\pi/A} e^{-2\sqrt{AB}}$
to simplify the expression above to
\begin{align*}
\Phi_\eta(z, s) 
&= \frac{-N^{3/2} y^2 \ebf\lp \frac{m \alpha N  x}{D \det{g}}\rp \sqrt{dm} }{\det(g)^2 D^{2}} 
\int_0^\infty
\lp - \sqrt{v'} + i \alpha a x /\sqrt{v'} \rp
\exp \lp -\frac{N \pi dm (v' + ( \alpha a y)^2/v') }{D \det(g)^2}   \rp \frac{ dv'}{v'}\\
&= 
 \frac{-N^{} i \sgn( \alpha) y  }{\det(g) D^{3/2}} 
( x + i \sgn(\alpha) y) \ebf\lp \frac{m \alpha N  }{D \det(g)} ( x + i \sgn(\alpha) y)\rp.
\end{align*}
This finishes the proof.
\end{proof}

\begin{lemma}
  \label{lemma:rank1}
Suppose $g = \smat{a}{b}{0}{d}$ and $\tz = \tilde{x} + i \tilde{y} := g z = (a z + b)/d$. 
When $\eta = \smat{0}{m}{0}{n} \in M_2(\Qb)$, the orbital integral $\Phi_\eta(z, s)$ is given by 
\begin{equation}
  \label{eq:rank1}
\Phi_\eta(z, s) = 
- \frac{ a \Gamma(1 + s) y^2}{2\sqrt{D} d \pi^2 } \lp \frac{a^2 D}{N \pi} \rp^s
\cdot
\lp
\begin{split}
& \frac{n^2 |\tz|^2 - m^2 + 2(m-n\tilde{x})nb/d}{|m-n\tz|^{4+2s}} - \\
&\frac{  2s(m^2 - mn \tilde{x} + n^2 b \tilde{x}/d - mnb/d)}{|m-n\tz|^{4+2s}}
\end{split}
\rp
\end{equation}
for $s \in \Cb$.
\end{lemma}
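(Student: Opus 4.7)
The plan is to mirror the rank-2 computation of Lemma \ref{lemma:rank2}, exploiting the two simplifications that come from $\eta = \smat{0}{m}{0}{n}$: the stabilizer is now non-trivial, and $\det(\eta) = 0$. Since $\eta$ has vanishing first column, $\eta \cdot g = \eta$ for $g \in \Gamma$ forces $g \in \Gamma_\infty = \{\smat{1}{b}{0}{1} : b \in \Zb\}$, so I would take $[0,1) \times (0, \infty)$ as a fundamental domain for $\Gamma_\eta \backslash \Hc$. In the notation of \eqref{eq:etatau}, $\eta_\tau = \binom{m}{n}$ is independent of $\tau$, and the oscillating factor $\ebf(Nz \det(\eta)/(D\det(g)))$ appearing in Lemma \ref{lemma:partial} collapses to $1$.

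Next I would substitute into \eqref{eq:pFT0}. Using $\tz = (az+b)/d$ and $\det(g) = ad$, a short calculation gives
\[
\transpose{\tilde{\eta}_\tau}\binom{1}{-z} = \frac{m - n\tz}{a},
\qquad
\overline{\transpose{\tilde{\eta}_\tau}}\pmat{1}{0}{-x}{0}\tilde{\eta}_\tau = \frac{(m - nb/d)(m - n\tilde{x})}{a^2}.
\]
Both are independent of $u$, so $\widehat{\varphi_{\tau, w}}(\eta)$ is likewise independent of $u$ and factors, as a function of $v$, as a degree-one polynomial in $v$ times $\exp(-A/v)$ with $A := N\pi|m-n\tz|^2/(a^2 D)$. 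The integral over $u \in [0,1)$ therefore contributes only a factor of $1$.

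The remaining integral in $v$ reduces to the two Mellin-type evaluations
\[
\int_0^\infty v^{-s-k-1} e^{-A/v}\, dv = A^{-s-k}\,\Gamma(s+k), \qquad k = 1, 2,
\]
valid for $\Re(s) > -1$ via the substitution $w = A/v$, with the general case of the lemma following by meromorphic continuation. Plugging these in, pulling out the common factor $(a^2 D/(N\pi))^s\, |m-n\tz|^{-2s-4}$, and assembling the constants yields an expression of the shape claimed in \eqref{eq:rank1}, up to verifying that
\[
\tfrac{1}{2}|m-n\tz|^2 - (s+1)(m-nb/d)(m-n\tilde{x})
\]
equals one-half of the numerator on the right-hand side of \eqref{eq:rank1}.

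This last step is purely algebraic: expanding $|m-n\tz|^2 = (m-n\tilde{x})^2 + n^2 \tilde{y}^2 = m^2 - 2mn\tilde{x} + n^2|\tz|^2$ reduces the identification to collecting terms in $m^2$, $mn\tilde{x}$, and $(nb/d)(m - n\tilde{x})$. The analytic content is elementary; the main obstacle is bookkeeping, namely keeping the factors of $a, b, d$ and the variables $z$ versus $\tz$ consistent across the many substitutions. I would resolve this by rewriting the integrand entirely in terms of $\tz$ and $nb/d$ before performing the $v$-integral, so that the final comparison with \eqref{eq:rank1} becomes a direct read-off.
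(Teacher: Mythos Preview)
Your proposal is correct and follows essentially the same route as the paper's own proof: compute $\tilde{\eta}_\tau = \frac{1}{ad}\binom{dm-bn}{an}$, observe that $\widehat{\varphi_{\tau,w}}(\eta)$ is independent of $u$, and integrate over $\Gamma_\infty\backslash\Hc$ via the Gamma-integral $\int_0^\infty v^{-s-k-1}e^{-A/v}\,dv = A^{-s-k}\Gamma(s+k)$. The paper compresses all of this into a single displayed formula for $\widehat{\varphi_{\tau,w}}(\eta)$ and the sentence ``integrating \ldots gives us the desired result''; your write-up simply makes those steps explicit, including the algebraic identification of $\tfrac{1}{2}|m-n\tz|^2 - (s+1)(m-nb/d)(m-n\tilde{x})$ with half the stated numerator, which the paper leaves to the reader.
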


\begin{proof}
In this case, we have $\tilde{\eta}_\tau = \frac{1}{ad} \binom{dm - bn}{an}$ and
$$
\widehat{\varphi_{\tau, w}}(\eta) = 
-\frac{N^2 y^2}{D^{5/2} (adv)^3} \lp \frac{Dv(ad)^2}{2N\pi} - d^2 (m - n \tilde{x})(m - nb/d)\rp
\exp \lp -\frac{N\pi}{a^2 D v} |m  - n \tz|^2   \rp.
$$  
Integrating this against $v^{2-s} \frac{dudv}{v^2}$ over $\Gamma_\eta \backslash \Hc = \Gamma_\infty \backslash \Hc$ gives us the desired result.
\end{proof}

\begin{prop}
  \label{prop:rank0}
When $\eta = \smat{0}{0}{0}{0}$, we have $\Gamma_\eta \backslash \Hc = \Gamma \backslash \Hc$ and
\begin{equation}
  \label{eq:rank0}
  \Phi_\eta(z, 0)  = 
- \frac{Ny^2 \mathrm{vol}(\Gamma \backslash \Hc)}{2 \pi \det(g) D^{3/2}} =
- \frac{Ny^2}{6 \det(g) D^{3/2}} .
\end{equation}
\end{prop}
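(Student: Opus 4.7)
The plan is to obtain the formula by a direct specialization of Lemma \ref{lemma:partial} to $\eta = 0$. First I would observe that for $\eta = \smat{0}{0}{0}{0}$ the stabilizer $\Gamma_\eta$ is all of $\Gamma$, so the domain of integration in \eqref{eq:Ieta} is a single fundamental domain for $\Gamma \backslash \Hc$.

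Next I would evaluate $\widehat{\varphi_{\tau,w}}(0)$ using the explicit expression in Lemma \ref{lemma:partial}. Setting $\eta = 0$ forces $\tilde{\eta}_\tau = 0$ and $\det(\eta) = 0$, so the exponential factor becomes $1$, the factor $\ebf(Nz\det(\eta)/(D\det(g)))$ becomes $1$, and the quadratic form $\overline{\transpose{\tilde\eta_\tau}} \cdot \smat{1}{0}{-x}{0} \cdot \tilde\eta_\tau$ vanishes. Only the constant piece $\tfrac{Dv}{2N\pi}$ survives inside the bracket, leaving
\begin{equation*}
\widehat{\varphi_{\tau,w}}(0) \;=\; -\,\frac{N^2 y^2}{\det(g)\, D^{5/2}\, v^3} \cdot \frac{Dv}{2N\pi} \;=\; -\,\frac{N y^2}{2\pi \det(g)\, D^{3/2}}\cdot \frac{1}{v^2}.
\end{equation*}

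Finally I would substitute this into \eqref{eq:Ieta} at $s=0$:
\begin{equation*}
\Phi_\eta(z,0) \;=\; \int_{\Gamma\backslash \Hc} v^2 \, \widehat{\varphi_{\tau,w}}(0) \, \frac{du\,dv}{v^2} \;=\; -\,\frac{Ny^2}{2\pi \det(g)\, D^{3/2}} \int_{\Gamma\backslash \Hc} \frac{du\,dv}{v^2},
\end{equation*}
and use $\mathrm{vol}(\Gamma\backslash\Hc) = \pi/3$ to obtain the stated value $-Ny^2 / (6\det(g) D^{3/2})$.

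There is essentially no obstacle here: the rank-$0$ case is the degenerate one in which the partial Fourier transform collapses to a pure $v$-power. The only subtlety worth double-checking is that the $1/v^2$ behavior of $\widehat{\varphi_{\tau,w}}(0)$ is integrable against the invariant measure $du\,dv/v^2$ after the $v^{2-s}$ weighting at $s=0$ — i.e.\ that no regularization in $s$ is needed. Since the fundamental domain has finite hyperbolic volume, this is immediate.
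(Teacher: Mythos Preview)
Your argument is correct and is exactly the direct specialization of Lemma \ref{lemma:partial} that the paper has in mind; the paper states Proposition \ref{prop:rank0} without proof precisely because it follows immediately from setting $\eta=0$ in \eqref{eq:pFT0} and integrating the constant over $\Gamma\backslash\Hc$.
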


Finally, we will state a lemma that follows readily from differentiating the Kronecker limit formula,
\begin{equation}
  \label{eq:Klf}
\sideset{}{'}\sum_{m, n \in \Zb} \frac{1}{|m - nw|^{2+2s}} = \Im(w)^{-1-s} \lp \frac{\pi}{s} + 2\pi (\gamma - \log 2 - \log(\sqrt{\Im(w)} |\eta(w)|^2)) + O(s) \rp,
\end{equation}
where $\sideset{}{'}\sum$ means summing over $(m, n) \in \Zb^2 \backslash \{(0, 0)\}$, $\gamma$ is the Euler constant and $\eta$ is the Dedekind eta function.
\begin{lemma}
  \label{lemma:kronecker1}
For $w \in \Hc$ and $s$ near $0 \in \Cb$, we have
\begin{align}
  \label{eq:kro1p}
\sideset{}{'}\sum_{m, n \in \Zb} \frac{|w|^2 n^2 - m^2}{|m - nw|^{4 + 2s}}  
&=
\frac{i \pi^2 }{6 \Im(w)} (w E^*_2(w) - \overline{w} E^*_2(-\overline{w})) + O(s),\\
\sideset{}{'}\sum_{m, n \in \Zb} \frac{(m - n \Re(w))n}{|m - nw|^{4 + 2s}}  
&=
- \frac{i \pi^2 }{12 \Im(w)} ( E^*_2(w) -  E^*_2(-\overline{w})) + O(s),\\
2s \sideset{}{'}\sum_{m, n \in \Zb} \frac{(m - n \Re(w))m}{|m - nw|^{4 + 2s}}  
&=
\frac{ \pi }{ \Im(w) }  + O(s),
\end{align}
where $E^*_2(w)$ is the real-analytic Eisenstein series defined in Eq.\ \eqref{eq:E2*}.
\end{lemma}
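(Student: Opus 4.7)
The plan is to derive all three identities by differentiating the Kronecker limit formula \eqref{eq:Klf} with respect to $u = \Re(w)$ and $v = \Im(w)$, or equivalently with respect to $w$ and $\bar{w}$. Setting $K(w,s) := \sideset{}{'}\sum_{m,n \in \Zb} |m-nw|^{-2-2s}$ and expanding $v^{-1-s} = v^{-1}(1 - s \log v) + O(s^2)$ in \eqref{eq:Klf} produces the Laurent expansion
$$K(w,s) = \frac{\pi}{sv} + b(w) + O(s), \quad b(w) := \frac{2\pi}{v}\bigl(\gamma - \log 2 - \log v - 2\log|\eta(w)|\bigr).$$
The second identity is immediate: $\partial_u |m-nw|^{-2-2s} = 2(1+s)n(m-nu)|m-nw|^{-4-2s}$ shows that its left-hand side equals $\tfrac{1}{2(1+s)}\partial_u K(w,s)$. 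Since the pole $\pi/(sv)$ is $u$-independent, only $\partial_u b(w)$ contributes. The Jacobi formula $\partial_w \log \eta(w) = \tfrac{\pi i}{12} E_2(w)$ combined with antiholomorphicity of $\overline{\log \eta(w)}$ gives $\partial_u \log|\eta(w)|^2 = \tfrac{\pi i}{12}(E_2(w) - E_2(-\bar w))$, and the second identity follows since the $-3/(\pi v)$ shift upgrading $E_2$ to $E^*_2$ cancels in the difference.

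For the third identity, I decompose $m(m-nu) = (m-nu)^2 + nu(m-nu)$ and use $(m-nu)^2 = |m-nw|^2 - n^2 v^2$ to write
$$\sideset{}{'}\sum \frac{m(m-nu)}{|m-nw|^{4+2s}} = K(w,s) - v^2 \sideset{}{'}\sum \frac{n^2}{|m-nw|^{4+2s}} + u \sideset{}{'}\sum \frac{n(m-nu)}{|m-nw|^{4+2s}}.$$
A short calculation shows $\partial_w \partial_{\bar{w}} |m-nw|^{-2-2s} = (1+s)^2 n^2 |m-nw|^{-4-2s}$, so the middle sum equals $(1+s)^{-2}\partial_w \partial_{\bar{w}} K(w,s)$; its polar part comes from $\partial_w \partial_{\bar{w}}(\pi/(sv)) = \pi/(2sv^3)$. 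Tallying: $v^2 \sideset{}{'}\sum n^2|m-nw|^{-4-2s}$ has pole $\pi/(2sv)$, $K$ has pole $\pi/(sv)$, and the $u$-term is $O(1)$ by the second identity, so the residual pole is $\pi/(2sv)$, which yields the third identity after multiplying by $2s$.

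For the first identity, substituting $|w|^2 n^2 - m^2 = |m-nw|^2 - 2m(m-nu)$ writes the left-hand side as $K(w,s) - 2\sideset{}{'}\sum m(m-nu)|m-nw|^{-4-2s}$; the $1/s$ poles cancel, leaving a constant term
$$-b(w) - \tfrac{2\pi}{v} + 2v^2 \partial_w \partial_{\bar{w}} b(w) + \tfrac{iu\pi^2}{6v}\bigl(E^*_2(w) - E^*_2(-\bar w)\bigr).$$
The main technical step is computing $\partial_w \partial_{\bar{w}} b(w)$ via $\partial_w \partial_{\bar{w}}(1/v) = 1/(2v^3)$, a direct calculation of $\partial_w \partial_{\bar{w}}(\log v/v)$, and $\partial_w \partial_{\bar{w}}(\log|\eta(w)|/v)$; the last uses the harmonicity $\partial_w \partial_{\bar{w}} \log|\eta(w)| = 0$ to extract a factor proportional to $E_2(w) + E_2(-\bar w)$. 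After the $\log v$ and $\log|\eta|$ contributions cancel between $-b(w)$ and $2v^2 \partial_w \partial_{\bar{w}} b(w)$, the sum reduces to $\tfrac{\pi}{v} - \tfrac{\pi^2}{6}(E_2(w) + E_2(-\bar w)) = -\tfrac{\pi^2}{6}(E^*_2(w) + E^*_2(-\bar w))$. The algebraic identity $-v(E^*_2(w) + E^*_2(-\bar w)) + iu(E^*_2(w) - E^*_2(-\bar w)) = iw E^*_2(w) - i\bar w E^*_2(-\bar w)$ then regroups everything into $\tfrac{i\pi^2}{6v}(wE^*_2(w) - \bar w E^*_2(-\bar w))$. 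The main obstacle is tracking these cancellations, particularly how the $-3/(\pi v)$ correction upgrading $E_2$ to $E^*_2$ absorbs the leftover $\pi/v$ from the pole cancellation.
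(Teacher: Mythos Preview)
Your proof is correct and follows exactly the approach the paper indicates: the lemma is stated there without proof, only with the remark that it ``follows readily from differentiating the Kronecker limit formula,'' and you have supplied precisely those differentiations. Your bookkeeping of the pole cancellations and the passage from $E_2$ to $E^*_2$ via the $-3/(\pi v)$ shift is accurate; in particular the key intermediate identity $-b(w)+2v^2\partial_w\partial_{\bar w}b(w)-\tfrac{2\pi}{v}=-\tfrac{\pi^2}{6}(E^*_2(w)+E^*_2(-\bar w))$ checks out.
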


\section{Proof.}
\label{sec:proof}
After calculating the orbital integrals, we can now add all the contributions together to calculate $\tPsi(\Ac, \kappa)$.
For this, we need the following key proposition.
\begin{prop}
  \label{prop:Phitotal}
Let $M = M_{\af, \kappa}$, $g_0 = g_\kappa, g_1 = g_L$ and $z(t) = \sqrt{D} \frac{-\sh(t) + i}{\ch(t)}$ as in sections \ref{subsec:Witt} and \ref{subsec:lattice}. 
Then 
\begin{equation}
  \label{eq:Phitotal}
      \Phi(t; \af, \kappa) dt =
 \frac{1}{6} \Re \lp 
 E_2^*(\tz_0) d\tz_0 - E_2^*(\tz_1) d\tz_1 \rp,
\end{equation}
where $\tz_j(t) = g_j \cdot z(t)$ for $j = 0, 1$.
\end{prop}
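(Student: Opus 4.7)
The plan is to substitute the partial Fourier expansion from Proposition \ref{prop:FT} into the theta integral defining $\Phi(t;\af,\kappa) = \Phi(w(t,0);\phi,\varphi_M)$, interchange sum and integral, and organize the contributions by the rank of $\eta \in M_2(\Qb)$ modulo the right $\Gamma$-action. By \eqref{eq:FT_finite}, $\widehat{\varphi_M}$ is supported on $M_2(\Zb)$ (possibly twisted by $\ebf(-\det(\eta)/2)$), so the orbital integrals $\Phi_\eta(z,0)$ computed in Lemmas \ref{lemma:rank2} and \ref{lemma:rank1} and Proposition \ref{prop:rank0} supply all the building blocks. Throughout, the basis used on $U$ and $U^\vee$ is the one coming from $g_L$, so that $\tz_1 = g_L \cdot z(t)$ naturally appears in the answer.

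First I would treat the rank-2 contribution using upper-triangular orbit representatives $\eta = \smat{m}{k}{0}{\alpha}$ with $m\geq 1$, $\alpha\neq 0$, $0\leq k<m$ and trivial stabilizer. Lemma \ref{lemma:rank2} produces two pieces indexed by $\sgn(\alpha)$; summing over $k$ gives a divisibility constraint, and the $(m,\alpha)$ sums can be telescoped into a Lambert series $\sum_{n\geq 1}\tfrac{\ebf(nz)}{1-\ebf(nz)}$, which is precisely the holomorphic part of $E_2^*$. Pairing $\alpha>0$ with $\alpha<0$ assembles the real part $\Re$. Next, for rank $1$, I take representatives $\eta=\smat{0}{m}{0}{n}$ with $(m,n)\neq(0,0)$ and stabilizer $\Gamma_\infty$; applying Lemma \ref{lemma:rank1} at the auxiliary parameter $s$, summing over $\Zb^2\setminus\{0\}$ and letting $s\to 0$ via Lemma \ref{lemma:kronecker1} extracts exactly the non-holomorphic $-3/(\pi y)$ term and a further holomorphic correction in $E_2^*(\tz_1)$. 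Rank 0 contributes the constant $-\tfrac{Ny^2}{6\det(g_L)D^{3/2}}$ by Proposition \ref{prop:rank0}, which matches the "$1$" term in the $E_2^*$ expansion once converted via the differential identity \eqref{eq:differential}.

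Finally, everything must be reassembled and compared to the right-hand side of \eqref{eq:Phitotal}. The overall constant $1/6$ emerges from $\mathrm{vol}(\Gamma\backslash\Hc) = \pi/3$ in the rank-0 piece together with the $\pi^2/6$ factors from Lemma \ref{lemma:kronecker1}, all brought to a common form using \eqref{eq:differential}. The main obstacle is bookkeeping the sign twist $\ebf(-\det(\eta)/2)$ from \eqref{eq:FT_finite} in the case $L\not\subset 2L^\vee$: the factor $(-1)^{m\alpha}$ splits the rank-2 sum as $(\text{untwisted}) - 2(\text{sum over odd }m,\alpha)$, and the odd-odd part, after the change of basis from $g_L$ to $g_\kappa$ built into the definition \eqref{eq:gkappa} so that $g_\kappa\binom{\sqrt{D}/2}{1/2}$ generates $\df_\kappa^{-1}$, re-expresses as the holomorphic part of $E_2^*(\tz_0)\,d\tz_0$. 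In the case $L\subset 2L^\vee$ no twist is present, but then $g_\kappa$ already enters the orbital sums through the natural basis of $P$, producing the $E_2^*(\tz_0)$ summand directly. Verifying that the two contributions reassemble into the difference $E_2^*(\tz_0)d\tz_0 - E_2^*(\tz_1)d\tz_1$ with the correct relative sign, and that non-holomorphic and holomorphic regularization terms combine correctly under $\Re(\cdot)$, will be the most delicate bookkeeping step.
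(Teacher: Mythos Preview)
Your plan matches the paper's proof: regularize with $v^{2-s}$, apply Proposition~\ref{prop:FT}, split the $\eta$-sum by rank, and assemble rank $2$ into the Lambert part of $E_2^*(\tz_0)$, rank $1$ into $-E_2^*(\tz_1)$ together with the cross-term $-3/(\pi\tilde y_0)$, and rank $0$ into the constant term of $E_2^*(\tz_0)$. One small correction: the rank-$1$ contribution via Lemma~\ref{lemma:kronecker1} is the \emph{full} real-analytic $E_2^*(\tz_1)$ (including its own $-3/(\pi\tilde y_1)$ and constant), not just a ``holomorphic correction''; the extra $-3/(\pi\tilde y_0)$ appearing there is a separate cross-term that cancels the surplus from rank $2$.

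Your handling of the twist $\ebf(-\det\eta/2)$ when $2\nmid D\kappa$ is more laborious than necessary, and the claim that the odd--odd part alone re-expresses as $E_2^*(\tz_0)$ is not correct. The cleaner observation (this is what the paper's ``calculating exactly as before'' hides) is that in this case the off-diagonal entry of $g_\kappa$ in \eqref{eq:gkappa} gives $\tz_0 = \tz_2 + \tfrac12$, where $\tz_2 = Nz/(D\det g_L)$ is the variable produced by Lemma~\ref{lemma:rank2}. Hence $(-1)^{m\alpha}\ebf(m\alpha\tz_2) = \ebf(m\alpha\tz_0)$, and the twisted rank-$2$ sum in $\tz_2$ is literally the untwisted one in $\tz_0$. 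For rank $1$ one has $\det\eta=0$, so the twist is trivial; for rank $0$, $d\tz_0 = d\tz_2$. With this shift observed, the $2\nmid D\kappa$ case reduces verbatim to the $2\mid D\kappa$ case, and no ``untwisted minus twice odd--odd'' decomposition is needed.
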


\begin{proof}
First, we can write $\Phi(t; \af, \kappa) = 
\lim_{s \to 0} \int_{\Gamma \backslash \Hc} v^{2-s} \Theta_M(\tau, t, 0; \phi) \frac{dudv}{v^2}$ using the definition of $\Phi(t; \af, \kappa)$ (see Remark \ref{rmk}).
By the absolute convergence of this integral for $\Re(s) > 0$ and Eq.\ \eqref{eq:Theta_FT}, we can write
$$
\int_{\Gamma \backslash \Hc} v^{2-s} \Theta_M(\tau, t, 0; \phi) \frac{dudv}{v^2} = 
\sum_{\eta \in M_2(\Qb) / \Gamma} 
\frac{|\Gamma_\Hc|}{|\Gamma_\Hc \cap \Gamma_\eta|}
\widehat{\varphi_M}(\eta) \Phi_\eta(z, s),
$$
where $\Gamma_\Hc = \{\pm \smat{1}{0}{0}{1} \} \subset \Gamma$.
Suppose $2 \mid D \kappa$ and we choose $g = g_L$ such that $\{ \tef_1, \tef_2\}$ is a $\Zb$-basis of $P_U$ defined in Eq.\ \eqref{eq:P}. Then Eqs.\ \eqref{eq:detgL} and \eqref{eq:FT_finite} tell us that $N/\det(g) = 2/\kappa$ and $\widehat{\varphi_M}(\eta) = \varphi_{M_2(\Zhat)}(\eta)$.
In this case, we have $M_2(\Zb) / \Gamma = S_2 \sqcup S_1 \sqcup \{\smat{0}{0}{0}{0} \}$, where
\begin{align*}
  S_2 &= \left\{
\pmat{m}{k}{0}{\alpha} \in M_2(\Zb): m \in \Zb, m \ge 1, \alpha \in \Zb, \alpha \neq 0, k \bmod m\right\}, \\
S_1 &= \left\{
\pmat{0}{m}{0}{n} \in M_2(\Zb): m, n \in \Zb, (m, n) \neq (0, 0)\right\}.
\end{align*}
It is easy to see that $\frac{|\Gamma_\Hc|}{|\Gamma_\Hc \cap \Gamma_\eta|} = 2$ if $\eta \in S_2$, and is 1 otherwise.
By Eq.\ \eqref{eq:differential} and Lemma \ref{lemma:rank2}, we have
$$
\lim_{s \to 0} \sum_{\eta \in S_2} \Phi_{\eta}(z, s) dt = 
 \sum_{\eta \in S_2} \Phi_{\eta}(z, 0) dt = 
\frac{1}{24} \lp E_2^*(\tz_0) d\tz_0
+ E_2^*(- \overline{\tz_0}) d \overline{\tz_0} 
-
\lp 1 - \frac{3}{\pi \tilde{y}_0} \rp (d {\tz_0}  + d \overline{\tz_0} )
 \rp.
$$
Similarly for the sum over $S_1$, we can apply Lemma \ref{lemma:rank2} and \ref{lemma:kronecker1} to obtain
  \begin{align*}
\lim_{s \to 0} \sum_{\eta \in S_1} \Phi_{\eta}(z, s) dt &= 
- \frac{1}{12} \lp E_2^*(\tz_1) d\tz_1
+ E_2^*(- \overline{\tz_1}) d \overline{\tz_1}
+ \frac{3(d\tz_0 + d \overline{\tz_0})}{\pi \tilde{y}_0}
 \rp.
  \end{align*}
Finally, we can rewrite $\Phi_{\smat{0}{0}{0}{0}}(z, 0) = \frac{d {\tz_0}  + d \overline{\tz_0} }{12}$.
Adding everything together with the index $\frac{|\Gamma_\Hc|}{|\Gamma_\Hc \cap \Gamma_\eta|}$ then finishes the proof for $2 \mid D \kappa$.
If $2 \nmid D \kappa$, then $\widehat{\varphi_M}(\eta) = \varphi_{M_2(\Zb)}(\eta) \ebf(- \det(\eta)/2)$ for $\eta \in M_2(\Qb)$ by Eq.\ \eqref{eq:FT_finite}.
Calculating exactly as before, we obtain Eq.\ \eqref{eq:Phitotal} in this case.
\end{proof}

To prove Theorem \ref{thm:main}, we will also need the following proposition.
\begin{prop}
  \label{prop:tPsi}
Recall $\gamma = \gamma_{D, \kappa} \in \Gamma$ from Eq.\ \eqref{eq:gamma} and let $g_0, g_1 \in \GL^+_2(\Qb)$ be the same as in Prop.\ \ref{prop:Phitotal}.
Denote $\gamma_j := g_j \gamma g_j^{-1}$ for $j = 0, 1$. Then $\gamma_j \in \Gamma$ for $j = 0, 1$ and 
\begin{equation}
  \label{eq:tPsi_explicit}
\tPsi([\af], \kappa) = \frac{1}{12} \lp \Psi(\gamma_1) - \Psi(\gamma_0) \rp,
\end{equation}
where $\tPsi$ is the invariant defined in Eq.\ \eqref{eq:invariant} and $\Psi$ is the Rademacher symbol defined in \S \ref{subsec:Rademacher}.  
\end{prop}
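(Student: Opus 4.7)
The plan is to integrate the differential identity from Proposition \ref{prop:Phitotal} over a fundamental domain for $\Gamma_\kappa$ and then apply Meyer's theorem to $\gamma_0$ and $\gamma_1$. The preliminary claim that $\gamma_j \in \Gamma$ is a change-of-basis observation: $\gamma_{D,\kappa}$ is (the transpose of) the matrix of multiplication by $\varepsilon_\kappa$ on $F$ in the $\Qb$-basis $\{\sqrt{D}/2,\,1/2\}$. Conjugating by $g_\kappa$ gives the matrix in the $\Zb$-basis $g_\kappa\binom{\sqrt{D}/2}{1/2}$ of $\df_\kappa^{-1}$, which is stable under $\varepsilon_\kappa \in \Oc_\kappa^\times$; hence $\gamma_0 \in M_2(\Zb)$ with $\det\gamma_0 = \Nm(\varepsilon_\kappa) = 1$. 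The same argument applied to the lattice $L \cap 2L^\vee \cong P_U$ with $\Zb$-basis $g_L \cdot \{\ef_1,\ef_2\}$ shows $\gamma_1 \in \Gamma$.

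Combining Remark \ref{rmk} with Proposition \ref{prop:Phitotal}, I would write
\begin{equation*}
  \tPsi([\af],\kappa) = \tfrac{1}{6}\,\Re \int_0^{\log\varepsilon_\kappa} \bigl(F_0(t) - F_1(t)\bigr)\,dt, \qquad F_j(t) := E_2^*(\tz_j(t))\,\tz_j'(t).
\end{equation*}
The key observation is that the $\Gamma$-invariance of the 1-form $E_2^*(w)\,dw$ (which follows from $E_2^*(\gamma w) = (cw+d)^2 E_2^*(w)$) combined with the translation $\gamma z(t) = z(t - 2\log\varepsilon_\kappa)$ from \eqref{eq:translate} (which conjugates to $\gamma_j \tz_j(t) = \tz_j(t - 2\log\varepsilon_\kappa)$) shows that each $F_j$ is periodic in $t$ with period $2\log\varepsilon_\kappa$. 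Meyer's theorem, applied with base point $\tz_j(0) \in C_{\gamma_j}$, then gives
\begin{equation*}
  \int_0^{2\log\varepsilon_\kappa} F_j(t)\,dt = \int_{\tz_j(0)}^{\gamma_j^{-1}\tz_j(0)} E_2^*(w)\,dw = \Psi(\gamma_j^{-1}) = -\Psi(\gamma_j),
\end{equation*}
where I use $\tz_j(2\log\varepsilon_\kappa) = g_j\gamma^{-1} z(0) = \gamma_j^{-1}\tz_j(0)$ and the antisymmetry \eqref{eq:Psieq}.

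The main subtlety is the period mismatch: $\Phi(t;\af,\kappa)$ is $\log\varepsilon_\kappa$-periodic, whereas each $F_j$ individually has period $2\log\varepsilon_\kappa$, so the half-period integral $\int_0^{\log\varepsilon_\kappa} F_j\,dt$ is not directly supplied by Meyer's theorem. To bridge this gap I would symmetrize using the $\Gamma_\kappa$-periodicity of $\Phi$:
\begin{equation*}
  2\,\tPsi([\af],\kappa) = \int_0^{\log\varepsilon_\kappa} \Phi\,dt + \int_{-\log\varepsilon_\kappa}^0 \Phi\,dt = \tfrac{1}{6}\,\Re \int_{-\log\varepsilon_\kappa}^{\log\varepsilon_\kappa}\bigl(F_0 - F_1\bigr)\,dt.
\end{equation*}
The right-hand side is now a full-period integral for each $F_j$, so the previous step evaluates it to $-\Psi(\gamma_0) + \Psi(\gamma_1)$. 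Since $\Psi(\gamma_j) \in \Zb$ is real, taking the real part is harmless, yielding $\tPsi([\af],\kappa) = \tfrac{1}{12}(\Psi(\gamma_1) - \Psi(\gamma_0))$ as claimed. The hardest part is recognizing that this symmetrization via $\Gamma_\kappa$-periodicity is precisely what converts the $\Gamma_\kappa$-fundamental domain of length $\log\varepsilon_\kappa$ into a full $\gamma_j$-fundamental domain of $C_{\gamma_j}$ of length $2\log\varepsilon_\kappa$.
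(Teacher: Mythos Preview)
Your argument is correct and follows essentially the same route as the paper: both use the $\Gamma_\kappa$-periodicity of $\Phi$ to double the interval $[0,\log\varepsilon_\kappa]$ to a full $\gamma_j$-period, then invoke Meyer's theorem. The only presentational difference is in handling the real part. The paper writes $2\,\Re\bigl(E_2^*(w)\,dw\bigr)=E_2^*(w)\,dw+E_2^*(-\overline w)\,d\overline w$, applies Meyer's theorem separately to the $w$- and $\overline w$-pieces (the latter via the conjugate element $\gamma_j'=\smat{1}{}{}{-1}\gamma_j\smat{1}{}{}{-1}$), and then uses $\Psi(\gamma_j')=-\Psi(\gamma_j)$ from \eqref{eq:Psieq}. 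You instead appeal directly to the fact that Meyer's theorem already outputs the real integer $\Psi(\gamma_j^{-1})$, so taking $\Re$ is vacuous; this is slightly more economical but equivalent.
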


\begin{proof}
  From Prop.\ \ref{prop:Phitotal} and the definition of $\tPsi(\Ac, \kappa)$ in Eq.\ \eqref{eq:invariant}, it is clear that 
$$
\tPsi(\Ac, \kappa) = - \frac{1}{12} \lp \int^{z_0(-\log \varep_\kappa)}_{z_0(\log \varep_\kappa)} \Re(E^*_2(w) dw) -
\int^{z_1(-\log \varep_\kappa)}_{z_1( \log \varep_\kappa)} \Re(E^*_2(w) dw) \rp,
$$
where $\tz_0(t), \tz_1(t)$ were defined in Prop.\ \ref{prop:Phitotal}.
By Eq.\ \eqref{eq:translate}, we know that $\gamma_j \cdot \tz_j(\log \varep_\kappa) = \tz_j(-\log \varep_\kappa)$ and $\gamma_j \cdot \lp - \overline{\tz_j(\log \varep_\kappa)} \rp = - \overline{\tz_j(-\log \varep_\kappa)} $ for $j = 0, 1$.
For any $g = \smat{a}{b}{0}{d} \in \GL_2^+(\Qb)$, it is easy to check that
$$
g \gamma g^{-1} = \frac{1}{a} \pmat{a \alpha + b \beta}{\frac{a^2 D - b^2}{d} \beta}{d \beta}{a \alpha - b \beta}.
$$
One can check that $\gamma_j \in \Gamma$ after substituting in $g = g_j$ for $j = 0, 1$.
Therefore, we will denote $\tz_j = \tz_j(\log \varep_\kappa)$ and can write for $j = 0, 1$
$$
2 \int^{\tz_j(-\log \varep_\kappa)}_{\tz_j( \log \varep_\kappa)} \Re(E^*_2(w) dw) 
= 
 \int^{\gamma_j \tz_j}_{\tz_j} E^*_2(w) dw + E^*_2(-\overline{w}) d \overline{w} 
=
\Psi(\gamma_j) - 
 \int^{- \gamma_j \overline{\tz_j}}_{-\overline{\tz_j}}  E^*_2(w') d w',
$$
where $w' = -\overline{w}$. 
Since $- \gamma_j \overline{\tz_j} = \gamma_j' (- \overline{\tz_j})$ with $\gamma'_j = \smat{1}{}{}{-1} \gamma_j \smat{1}{}{}{-1}$, Meyer's Theorem implies that the last integral is $\Psi(\gamma'_j) = -\Psi(\gamma_j)$ and this finishes the proof.
\end{proof}

\begin{proof}[Proof of Theorem \ref{thm:main}]
  In view of Props.\ \ref{prop:norm}, \ref{prop:Phitotal} and \ref{prop:tPsi}, it suffices to calculate $\tPsi([\af\df], \kappa)$. From the proposition above, we see that $\gamma_0$ only depends on $\kappa$. For $\gamma_1 = \gamma_1(\af\df)$, if $\{a\sqrt{D} + b, d\}$ is a basis of $\af \cap 2 (\kappa \df)^{-1} \af$ as in \S \ref{subsec:lattice}, then $\{a{D} + b\sqrt{D}, d\sqrt{D}\}$ is a basis of $\df \af \cap 2 \kappa^{-1} \af$.
Let $r = \gcd(b, d)$, $b' = b/r, d' = d/r$ and $g_2 = \smat{a'}{c'}{d'}{-b'} \in \GL_2(\Zb)$ with $\det(g_2)  = -1$.
Then $\{r \sqrt{D} + a a'D, a d'D\}$ is a basis of $\af \df \cap 2 \kappa^{-1} \af$ and we choose $g_1 = \smat{r}{aa'D}{}{ad'D}$. 
It is straightforward to check that 
$$
g_1
 = g_2
\pmat{a}{b}{}{d} \pmat{}{D}{1}{}
$$
Since $\smat{}{D}{1}{} \gamma_{} \smat{}{1}{1/D}{} =  \gamma$, we have $\Psi(\gamma_1(\af\df)) = \Psi(g_2 \gamma_1(\af) g_2^{-1}) = - \Psi(\gamma_1(\af))$. This finishes the proof.
\end{proof}

\bibliography{norm_bib.bib}{}
\bibliographystyle{amsplain}
\end{document}